\definecolor{darkgreen}{rgb}{0.00,0.33,0.25}
\definecolor{darkred}{rgb}{0.60,0.05,0.05}
\definecolor{darkblue}{rgb}{0.05,0.05,0.60}
\theoremstyle{plain}
\newtheorem{theorem}{Theorem}[section]
\newtheorem{lemma}[theorem]{Lemma}
\newtheorem{definition}[theorem]{Definition}
\newtheorem*{definition*}{Definition}
\theoremstyle{remark}
\newtheorem{remark}[theorem]{Remark}
\newtheorem*{claim*}{Claim}
\newtheorem*{remark*}{Remark}
\newtheorem*{example*}{Example}
\newtheorem*{notation*}{Notation}
\numberwithin{equation}{section}
\def\R{{\mathbb R}}
\renewcommand{\a}{\alpha}
\newcommand{\eps}{\varepsilon}
\renewcommand{\phi}{\varphi}
\newcommand{\hL}{\widehat L}
\newcommand{\dd}{\; \mathrm{d}}
\newcommand{\ip}[1]{\langle {#1}\rangle}
\newcommand{\bip}[1]{\big\langle {#1}\big\rangle}
\newcommand{\abs}[1]{\vert {#1}\vert}
\DeclareMathOperator{\Ric}{Ric}
\DeclareMathOperator{\Hess}{Hess}
\newcommand{\ddt}{\frac{\mathrm{d}}{\mathrm{d}t}}
\newcommand{\cH}{\mathcal{H}}
\newcommand{\cB}{\mathcal{B}}
\newcommand{\cW}{\mathcal{W}}
\newcommand{\cS}{\mathcal{S}}
\newcommand{\sD}{\mathsf{D}}
\newcommand{\cX}{\mathcal{X}}
\newcommand{\cE}{\mathcal{E}}
\newcommand{\cA}{\mathcal{A}}
\newcommand{\cY}{\mathcal{Y}}
\newcommand{\cP}{\mathscr{P}}
\newcommand{\PX}{\cP(\cX)}
\newcommand{\PXs}{\cP_*(\cX)}
\newcommand{\hrho}{\hat\rho}
\definecolor{jan}{rgb}{0.0,0.3,0.8}
\begin{document}

\title[Ricci bounds for Bernoulli--Laplace and Random Transposition models]
{Discrete Ricci Curvature bounds for Bernoulli--Laplace and  Random Transposition models}

\author{ {Matthias Erbar}, {Jan Maas}, {Prasad Tetali} }
\thanks{P.T. gratefully acknwoledges support by the NSF grants
  DMS-1101447 and DMS-1407657. E.M and J.M gratefully acknowledge
  support by the German Research Foundation through the Collaborative
  Research Center 1060 \emph{The Mathematics of Emergent Effects} and
  the Hausdorff Center for Mathematics. This material is based upon
  work supported by the National Science Foundation under Grant
  No. 0932078 000, while E.M and J.M were in residence at MSRI, in the
  fall of 2013. All three authors thank the Mathematical Sciences
  Research Institute (MSRI) and the Simons Institute for the Theory of
  Computing, Berkeley, CA, for the hospitality and the conducive
  atmosphere of these institutes which facilitated this research
  collaboration. }

\address{Matthias Erbar,
Institute for Applied Mathematics\\
University of Bonn\\
Endenicher Allee 60\\
53115 Bonn\\
Germany} 
\email{erbar@iam.uni-bonn.de}

\address{Jan Maas,
Institute for Applied Mathematics\\
University of Bonn\\
Endenicher Allee 60\\
53115 Bonn\\
Germany} 
\email{maas@uni-bonn.de}

\address{Prasad Tetali,
School of Mathematics\\ 
Georgia Institute of Technology\\ 
Atlanta\\ GA 30332\\ USA}
\email{tetali@math.gatech.edu}

\date\today


 \begin{abstract}
   We calculate a Ricci curvature lower bound for some classical
   examples of random walks, namely, a chain on a slice of the
   $n$-dimensional discrete cube (the so-called Bernoulli--Laplace
   model) and the random transposition shuffle of the symmetric group
   of permutations on $n$ letters.
 \end{abstract}

\maketitle

\tableofcontents

\section{Introduction}

Many analytic and probabilistic properties of diffusion processes can
be derived from geometric properties of the underlying space. In
particular, a positive lower bound on the \emph{Ricci curvature} on a
Riemannian manifold has significant consequences for the associated
heat semigroup/Brownian motion. In fact, such a bound implies a
logarithmic Sobolev inequality, a Poincar\'e inequality, and a
Brunn--Minkowski inequality, as well as several geometric
inequalities.

Because of this wide range of implications, major research activity
has been devoted to developing a notion of Ricci curvature (lower
boundedness) that applies to non-smooth settings. Several approaches
have been developed. Bakry--Em\'ery \cite{BE85} introduced an approach
based on algebraic properties of diffusion operators (the so-called
$\Gamma_2$-calculus). Later, an approach based on optimal transport
has been developed by Lott, Sturm and Villani \cite{S06,LV09}, and
subsequently refined by Ambrosio, Gigli and Savar\'e \cite{AGS11b}. In
recent years, the equivalence of the algebraic approach and the
optimal transport approach has been proved, and a complete picture is
emerging.
 
However, since the theory does not apply to discrete settings, several
discrete notions of Ricci curvature have been introduced. In
particular, the notion of \emph{coarse Ricci curvature} was developed
in considerable detail by Ollivier \cite{Oll09}, although the basics
were implicit in Dobrushin's work and others' since (see, e.g., the
discussion in \cite{Oll10}), besides the notion being made explicit in
the Ph.D. thesis of Sammer \cite{Sam05}. This notion is based on
contraction properties of a Markov kernel in the (Kantorovich)
$W_1$-metric. In this paper we focus on a different notion of Ricci
curvature, which was proposed in \cite{Ma11} and systematically
studied in \cite{EM12}.

\subsection{A discrete notion of Ricci curvature}

Let $L$ be the generator of a continuous time Markov chain on a finite
set $\cX$, thus for functions $\psi : \cX \to \R$, the operator $L$ is
of the form $L \psi(x) = \sum_{y \in \cX} Q(x,y) (\psi(y) - \psi(x))$
where $Q(x,y) \geq 0$ for all $x, y \in \cX$ with $x \neq y$, and $Q(x,x) = 0$ for all $x \in \cX$. We shall
assume that there exists a reversible probability measure $\pi$ on
$\cX$, which means that $\pi(x) Q(x,y) = \pi(y)Q(y,x)$ for all $x,
y$. We let $\PX=\{\rho\in\R_+^{\cX}:\sum_x\rho(x)\pi(x)=1\}$ be the
space of probability densities on $\cX$ and denote by
$\cH(\rho)=\sum_x\rho(x)\log\rho(x)\pi(x)$ the relative entropy of
$\rho\in\PX$.

In \cite{Ma11} a metric $\cW$ on the space of probability measures
has been constructed with the property that the heat flow is the
gradient flow of the relative entropy. In this sense, $\cW$ may be
regarded as a natural analogue of the $2$-Wasserstein metric induced
by the Markov triple $(\cX, Q, \pi)$. We refer to Section
\ref{sec:prelim} for the precise definition of $\cW$.

We say that $(\cX, Q, \pi)$ has \emph{Ricci curvature bounded from
  below by $\kappa \in \R$} if the relative entropy $\cH$ is
$\kappa$-geodesically convex along $\cW$-geodesics. More explicitly,
for any constant speed geodesic $\{\rho_t\}_{t \in [0,1]}$ in
$(\PX,\cW)$, we require that
  \begin{align*}
    \cH(\rho_t) \leq (1-t) \cH(\rho_0) + t \cH(\rho_1) -
    \frac\kappa{2} t(1-t) \cW(\rho_0, \rho_1)^2\;.
  \end{align*}
  In this case, we write
  \begin{align*}
    \Ric(\cX, Q, \pi) \geq \kappa\;.
  \end{align*}
  This notion of Ricci curvature is a direct analogue of the notion
  introduced by Lott, Sturm, and Villani in the setting of geodesic
  metric measure spaces.

It has been shown in \cite{EM12} that this notion of Ricci curvature
has significant consequences, such as an HWI-inequality \`a la Otto--Villani, a modified logarithmic Sobolev
inequality (MLSI) and a Poincar\'e (or spectral gap) inequality. The MLSI (with constant $\alpha > 0$) asserts that 
\begin{align*}
  \cH(\rho)\leq{\alpha}^{-1}\cE(\rho,\log\rho)\;,
\end{align*}
for all $\rho \in \PX$, where $\cE$ is the associated Dirichlet form defined by 
\begin{align*}
  \mathcal{E}(f,g)~=~ - \ip{f, Lg}_{L^2(\cX,\pi)} ~=~\frac12\sum_{x,y \in \cX}\big(f(y)-f(x)\big)\big(g(y)-g(x)\big)Q(x,y)\pi(x)\;.
\end{align*}
The MLSI is equivalent to the exponential convergence estimate $\cH(e^{tL}\rho) \leq e^{-\alpha t} \cH(\rho)$. The Poincar\'e inequality asserts that
\begin{align*}
 \| \psi \|_{L^2(\cX,\pi)}^2 \leq \lambda^{-1} \cE(\psi, \psi)\,,
\end{align*}
for all functions $\psi : \cX \to \R$ with $\sum_{x \in \cX}
\psi(x)\pi(x) = 0$. It is equivalent to the exponential convergence
estimate $\|e^{t L} \psi\|_{L^2(\cX, \pi)} \leq e^{- \lambda t} \|
\psi\|_{L^2(\cX, \pi)}$. From now on, we will denote the optimal
constants in the inequalities by $\kappa$, $\alpha$ and $\lambda$
respectively. It is well known that $\lambda \geq \alpha /
2$. Moreover, it has been proved in \cite{EM12} that $\alpha/2 \geq
\kappa$.

In view of these consequences it is desirable to obtain sharp Ricci
curvature bounds in concrete discrete examples. So far, very little is
known in this direction. Two types of results have been obtained:
\begin{itemize}
\item Mielke \cite{Mie11b} obtained Ricci curvature bounds for
  one-dimensional birth-death chains. He applies his bounds to
  approximations of Fokker--Planck equations with $\kappa$-convex
  potential and shows that the curvature of the discrete
  approximations converge to $\kappa$. The proof relies on diagonal
  dominance of the Hessian matrix, and seems to be restricted to
  1-dimensional situations.
\item Erbar--Maas \cite{EM12} obtained a tensorisation result for
  Ricci curvature: if $\Ric(\cX_i, Q_i, \pi_i) \geq \kappa_i$ for $i =
  1, 2$, then the associated product chain on the product space $\cX_1 \times
  \cX_2$ has Ricci curvature bounded from below by $\min\{\kappa_1,
  \kappa_2\}$.
\end{itemize}
Apart from the elementary example of the
complete graph, no results are available beyond the 1-dimensional or
the product setting. This paper provides the first results in this
direction.

In a different direction, Gozlan et al \cite{GRST13} constructed an
interpolation on the space of probability measures and derived a
displacement convexity of entropy inequality with respect to the
classical $W_1$-metric on the complete graph and products of complete
graphs, in particular, the $n$-dimensional discrete cube; the results
thus obtained are consistent with the bounds on the curvature in the
sense \cite{EM12} as well as the coarse Ricci curvature.

\subsection{The Bernoulli-Laplace model}

The Bernoulli-Laplace model is the simple exclusion process
on the complete graph and can be described as follows. Consider $k$
indistinguishable particles distributed over $n$ sites labeled by $[n]
= \{1, \ldots, n\}$, where $1 \leq k < n$. Each site contains at most
one particle. The state space of the system is the set $\Omega(n,k)=
\{x \in\{0,1\}^n \ : \ x_1 + \cdots + x_n = k \}$ (or equivalently,
the set of all subsets of $[n]$ of size $k$).

The Bernoulli-Laplace model is the continuous time Markov chain
described as follows: after random waiting times (independent
exponentially distributed with rate $\frac{1}{k(n-k)}$), one particle
is selected uniformly at random, and jumps to a free site, selected
uniformly at random. The transition rates are thus given by
\begin{align*}
 Q_{\rm BL}(x,y) =  \left\{ \begin{array}{ll}
\frac{1}{k(n-k)}\;,
 & \text{if $\| x-y\|_{\ell^1} = 2$ },\\
0\;,
 & \text{otherwise}\;.\end{array} \right.
\end{align*}
The uniform probability measure on $\Omega(n,k)$, given by $\pi_{\rm
  BL}(x) = \binom{n}{k}^{-1}$ for all $x$, is reversible for $Q_{\rm
  BL}$. Note that the Bernoulli-Laplace model may be seen as the
simple random walk on $\Omega(n,k)$ endowed with the Hamming distance
$d(x, y) = \frac{1}{2} \| x-y\|_{\ell^1}$.

We prove the following result:

\begin{theorem}[Ricci bound for the Bernoulli-Laplace
  model]\label{thm:BL-main}
  Let $n > 1$ and $1 \leq k \leq n-1$. The Ricci curvature of the
  Bernoulli-Laplace model $(\Omega(n,k),Q_{\rm BL},\pi_{\rm BL})$ is
  bounded from below by $\frac{n+2}{2k(n-k)}$.
\end{theorem}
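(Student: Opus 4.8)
The plan is to reduce the geodesic convexity of $\cH$ to a pointwise Hessian inequality and then verify that inequality using the large symmetry group of the model. Recall from \cite{EM12} that $\Ric(\cX,Q,\pi)\geq\kappa$ holds precisely when the Hessian of $\cH$ with respect to $\cW$ is bounded below by $\kappa$. Concretely, writing
\begin{align*}
\cA(\rho,\psi)=\tfrac12\sum_{x,y}\big(\psi(y)-\psi(x)\big)^2\,\hrho(x,y)\,Q(x,y)\pi(x)
\end{align*}
for the squared metric speed of the tangent vector encoded by $\psi\in\R^{\cX}$ at $\rho\in\PXs$, where $\hrho(x,y)=\theta(\rho(x),\rho(y))$ is the logarithmic mean, and letting $\cB(\rho,\psi)$ be the associated second-order action obtained by differentiating $\cA$ along the geodesic flow (the continuity and Hamilton--Jacobi equations), it suffices to prove $\cB(\rho,\psi)\geq\kappa\,\cA(\rho,\psi)$ for all $\rho\in\PXs$ and all $\psi$, with $\kappa=\frac{n+2}{2k(n-k)}$. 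The functional $\cB$ is an explicit sum running over \emph{pairs of edges sharing a common vertex}, weighted by the discrete gradients of $\psi$, by $\hrho$, and by the partial derivatives of the logarithmic mean. Thus the whole problem becomes an inequality between two quadratic forms in the discrete gradient of $\psi$, with $\rho$-dependent coefficients.

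The second step is to exploit the action of $S_n$ on $\Omega(n,k)$ by permuting the $n$ sites. Since $Q_{\rm BL}$ and $\pi_{\rm BL}$ are invariant, the functionals $\cA$ and $\cB$ are invariant under the induced action on $(\rho,\psi)$, and the graph is edge-transitive, with every vertex of degree $k(n-k)$. I would parametrise an oriented edge at a configuration $S\in\Omega(n,k)$ by a pair $(i,j)$ with $i\in S$ and $j\notin S$, describing the swap that removes the particle at $i$ and places it at $j$. The crucial combinatorial input is the classification of how two swaps $(i,j)$ and $(i',j')$ at the same vertex $S$ interact in $\cB$, according to the four cases $i=i'$ or $i\neq i'$ and $j=j'$ or $j\neq j'$ (same edge, same particle, same target, or disjoint). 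Counting the multiplicity of each type — the local structure around every vertex is identical by transitivity — converts the triple sum defining $\cB$ into a single sum over vertices of one fixed quadratic form whose coefficients carry the factors $k$, $n-k$ and $n$.

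The third step is the algebraic heart: to show that the resulting vertex-local quadratic form dominates $\kappa$ times the corresponding form coming from $\cA$. After the symmetry reduction one is left, at each configuration, with a quadratic form in the finitely many gradient components $\psi(S')-\psi(S)$ over the neighbours $S'$ of $S$, whose coefficients are built from the logarithmic means of neighbouring values of $\rho$. The strategy is to split this form into manifestly nonnegative blocks, exploiting elementary properties of the logarithmic mean (monotonicity and the bound $\theta(a,b)\leq\frac{a+b}{2}$, together with the convexity identities already used in \cite{EM12}). The factor $\frac{1}{k(n-k)}$ is simply the jump rate, while the dimensional factor $\frac{n+2}{2}$ must emerge from the count of adjacent edges together with the cross-terms between the ``same-particle'' and ``same-target'' swaps.

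I expect the main obstacle to be precisely this last step: controlling the cross-terms produced by the logarithmic mean in $\cB$ while extracting the \emph{sharp} constant $\frac{n+2}{2k(n-k)}$. Unlike the spectral-gap or MLSI computations, the Hessian inequality is genuinely nonlinear in $\rho$, so it cannot be read off from eigenvalues; and unlike the one-dimensional birth--death chains of \cite{Mie11b}, the relevant Hessian is not diagonally dominant, so a structural regrouping rather than a crude diagonal estimate is needed. As a first sanity check I would evaluate the inequality at the uniform density $\rho\equiv 1$, where $\hrho\equiv 1$ and $\cA(\rho,\psi)$ reduces to the Dirichlet form, so that $\cB-\kappa\cA$ becomes a purely combinatorial quadratic form; this both pins down the candidate value of $\kappa$ and reveals which grouping of terms should render the inequality nonnegative for general $\rho$.
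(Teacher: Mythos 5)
Your first two steps match the paper's proof exactly: the reduction to the pointwise Bochner-type inequality $\cB(\rho,\psi)\geq\kappa\,\cA(\rho,\psi)$ is Theorem \ref{thm:Ric-equiv}, and your four-way classification of pairs of edges meeting at a vertex (same edge; same particle $i=i'$; same target $j=j'$; disjoint) is precisely the paper's splitting $\cB=\cB^{\rm on}+\cB^{{\rm off},1}+\cB^{{\rm off},2}$ in the proof of Theorem \ref{thm:bernlap}. The genuine gap is your third step, which you yourself flag as the main obstacle, and where the strategy you describe would in fact fail: you propose to show, at each fixed configuration $S$, that a \emph{vertex-local} quadratic form in the gradients $\psi(S')-\psi(S)$ over the neighbours $S'$ of $S$ dominates $\kappa$ times the local part of $\cA$. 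No such bound holds with any positive constant. Already at $\rho\equiv 1$ (your own sanity check), the off-diagonal terms collected at a vertex $x$ equal $\frac{\pi(x)}{d^2}\bigl[\bigl(\sum_{y\sim x}g_y\bigr)^2-\sum_{y\sim x}g_y^2\bigr]$ with $g_y=\psi(y)-\psi(x)$ and $d=k(n-k)$; taking $g_y=+1$, $g_z=-1$ for two disjoint swaps and all other gradients zero makes this $-\frac{2\pi(x)}{d^2}$, which exactly cancels the on-diagonal gain $\frac{2\pi(x)}{d^2}$ allocated to $x$ by Lemma \ref{lem:diag}. The vertex-local ratio is then $0$, not $\frac{n+2}{2d}$: positive curvature is simply invisible at the level of a single vertex star.

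What is missing is the idea that rescues the cross-terms: regroup the adjacent-edge pairs into \emph{cycles} spanning several vertices rather than stars around one vertex. In the paper, every overlapping pair ($i=i'$ or $j=j'$) lies in a unique triangle, every disjoint pair lies in exactly two squares, and the key lemmas exploit the cycle identities $g_1+g_2+g_3=0$, resp.\ $g_1+g_2+g_3+g_4=0$, for the gradients around these subgraphs, together with the logarithmic-mean properties \eqref{eq:theta1}--\eqref{eq:theta2}, to prove $\cB^{\rm off}_\triangle\geq\frac{1}{2d}\cA_\triangle$ (Lemma \ref{lem:triangle}) and $\cB^{\rm off}_\Box\geq 0$ (Lemma \ref{lem:square}). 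In the example above, the negative cross-term at $x$ is compensated by positive cross-terms at the \emph{other} corners of the two squares through $x$, $y$, $z$; these cancellations involve edges not incident to $x$ and therefore cannot be captured by your local form. The constant then comes from counting: every edge lies in exactly $n-2$ triangles, so the triangle contributions sum to $\frac{n-2}{2d}\cA$, and adding the on-diagonal bound $\frac{2}{d}\cA$ of Lemma \ref{lem:diag} gives $\frac{n+2}{2d}$. Your intuition that the constant ``must emerge from the count of adjacent edges together with the cross-terms'' is right in spirit, but the counting has to be of triangles and squares, and the cross-terms must be controlled cycle-wise, not vertex-wise.
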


The mixing time for the Bernoulli-Laplace model has been studied by
Diaconis and Shashahani \cite{DiSh87}, who showed in particular that
the spectral gap equals $\frac{n}{k(n-k)}$. Their analysis is based on
lifting the model to the symmetric group and using representation
theory in this setting.  Lee and Yau \cite{LY98} obtained a sharp
logarithmic Sobolev inequality, improving earlier work by Diaconis and
Saloff-Coste \cite{DSC96}.  In three independent works Gao--Quastel
\cite{GQ03}, Goel \cite{Goe04}, and Bobkov--Tetali \cite{BT06} proved
the following (lower) bound on the MLSI constant:
\begin{align*}
  \frac{n}{2k(n-k)}~\leq~\a~\leq~\frac{2n}{k(n-k)}\;,
\end{align*}
where the upper bound comes from the fact that $\a\leq2\lambda$.
Since $\alpha\geq 2\kappa$ by \cite[Thm. 7.4]{EM12}, our Theorem
\ref{thm:BL-main} implies that
\begin{align*}
 \frac{n+2}{k(n-k)}~\leq~\a~\leq~\frac{2n}{k(n-k)}\;,
\end{align*}
which improves the lower bound above roughly by a factor $2$. Such an
improvement on the MLSI constant for the Bernoulli-Laplace model has
previously been obtained by Caputo et al. \cite{CDPP09}.

\subsection{The random transposition model}

Let $n \geq 1$, and let $\cS_n$ be the group of all permutations of
$[n]$. We define a graph structure on $\cS_n$ by connecting two
permutations $\sigma_1, \sigma_2 \in \cS_n$ if $\sigma_2 = \tau \circ
\sigma_1$, for some transposition $\tau$. (Recall that a transposition
is a permutation that interchanges precisely two elements). In this
case we write $\sigma_1 \sim \sigma_2$. Simple random walk is then
defined by
\begin{align*}
 Q_{\rm RT}(\sigma_1,\sigma_2) =  \left\{ \begin{array}{ll}
\frac{2}{n(n-1)}\;,
 & \text{if $\sigma_1 \sim \sigma_2$ },\\
0\;,
 & \text{otherwise}\;.\end{array} \right.
\end{align*}
The uniform measure $\pi_{RT}$ given  $\pi_{RT}(\sigma) = 1 / n!$ is reversible for $Q_{\rm RT}$.

\begin{theorem}[Ricci bound for the random transposition
  model]\label{thm:RT-main}
Let $n > 1$. The Ricci curvature of the random transposition model $(\cS_n,Q_{\rm RT},\pi_{\rm RT})$ is bounded from below by $\frac{4}{n(n-1)}$.
\end{theorem}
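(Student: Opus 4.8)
The plan is to reduce the geodesic-convexity statement to the discrete Bochner-type inequality of Erbar--Maas and then to verify that inequality by exploiting the large symmetry group of the random transposition Cayley graph. Recall from \cite{EM12} that $\Ric(\cS_n,Q_{\rm RT},\pi_{\rm RT})\ge\kappa$ is equivalent to the inequality $\cB(\rho,\psi)\ge\kappa\,\cA(\rho,\psi)$ holding for every density $\rho\in\PX$ and every $\psi\colon\cS_n\to\R$. Here $\cA(\rho,\psi)=\tfrac12\sum_{\sigma\sim\sigma'}(\psi(\sigma')-\psi(\sigma))^2\,\hat\rho(\sigma,\sigma')\,Q_{\rm RT}(\sigma,\sigma')$ is the action (which equals $\cW(\rho_0,\rho_1)^2$ along a constant-speed geodesic), with $\hat\rho(\sigma,\sigma')=\theta(\rho(\sigma),\rho(\sigma'))$ the logarithmic mean, while $\cB(\rho,\psi)=\tfrac{\mathrm{d}^2}{\mathrm{d}t^2}\cH(\rho_t)$ is the explicit (if lengthy) Hessian of the entropy computed along the geodesic/Hamilton--Jacobi flow. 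Thus the whole problem becomes: establish $\cB\ge\frac{4}{n(n-1)}\,\cA$.

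First I would use the symmetries to simplify both functionals. The kernel $Q_{\rm RT}$ is invariant under left multiplication by $\cS_n$ (so the graph is vertex-transitive) and under conjugation, which acts transitively on the generating set of transpositions. Consequently it suffices to analyse $\cA$ and $\cB$ through the local geometry at a single vertex, where the edges emanating from $\sigma$ are indexed by transpositions $\tau=(i\,j)$ and $\cB$ is assembled from contributions attached to the edges themselves and, crucially, to the $2$-paths $\sigma\to\tau\sigma\to\tau'\tau\sigma$. I would classify ordered pairs $(\tau,\tau')$ into the commuting (disjoint-support) pairs and the adjacent pairs sharing exactly one index; the adjacent pairs, whose products are $3$-cycles, generate the short cycles responsible for positive curvature, and each transposition is adjacent to exactly $2(n-2)$ others.

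With this bookkeeping I would write $\cB=\cB_{\mathrm{diag}}+\cB_{\mathrm{cross}}$, where $\cB_{\mathrm{diag}}$ (single-edge terms) already produces a fixed multiple of $\cA$, and $\cB_{\mathrm{cross}}$ collects the $2$-path terms weighted by the partial derivatives of the logarithmic mean $\theta$. Grouping $\cB_{\mathrm{cross}}$ over the triangles associated with adjacent pairs, the global inequality reduces to a single pointwise inequality at each vertex: a quadratic form in the gradient increments $\big(\psi(\tau\sigma)-\psi(\sigma)\big)_\tau$ whose coefficients are built from the logarithmic means on the incident edges. The target constant $\frac{4}{n(n-1)}=2\,Q_{\rm RT}$ should emerge from combining the per-edge rate with the count $2(n-2)$ of adjacent transpositions after completing the square.

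The main obstacle is twofold and lies entirely in this last reduction. The functional $\cB$ depends on $\rho$ nonlinearly through the logarithmic means $\hat\rho$ on the edges, and these means are not free parameters --- they are coupled because they arise from a single density --- so the quadratic-form inequality must be verified uniformly over all admissible positive weight configurations. At the same time, the cross terms range over the full combinatorics of pairs and triples of transpositions, so the real work is to isolate, for each triangle $\{\tau,\tau',\tau''\}$ of adjacent transpositions, a manifestly nonnegative local expression, complete the square, and check that the remainder exactly absorbs $\kappa\,\cA$. I expect the algebra of the logarithmic-mean derivatives, together with establishing nonnegativity of the pointwise quadratic form for every choice of positive edge weights, to be the genuinely hard and calculation-heavy step; everything preceding it is structural.
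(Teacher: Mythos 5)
Your reduction to the Erbar--Maas inequality $\cB(\rho,\psi)\geq\kappa\,\cA(\rho,\psi)$ and your classification of incident edge pairs into commuting pairs (disjoint supports) versus pairs sharing one index do match the paper's setup. But the mechanism you propose for producing the constant $\frac{4}{n(n-1)}$ is wrong, and the step you defer as ``genuinely hard and calculation-heavy'' would fail. The Cayley graph of $\cS_n$ generated by transpositions is bipartite (the sign of a permutation alternates along every edge), hence contains no triangles at all: if $\tau,\tau'$ are transpositions sharing one index, the product $\tau'\tau$ is a $3$-cycle, which is not a transposition, so $\tau'\tau\sigma$ is not adjacent to $\sigma$. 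The ``triangles associated with adjacent pairs'' over which you plan to group the cross terms do not exist; such a pair of edges lies instead in exactly two $4$-cycles, since $\tau_{iq}\tau_{ij}\sigma=\tau_{jq}\tau_{iq}\sigma$ and $\tau_{jq}\tau_{ij}\sigma=\tau_{ij}\tau_{iq}\sigma$ are the two common neighbors of $\tau_{ij}\sigma$ and $\tau_{iq}\sigma$ besides $\sigma$. Moreover, even if one groups these cross terms over the subgraphs isomorphic to the Cayley graph of $\cS_3$ (spanned by the three transpositions on a common triple of indices), the paper's Lemma \ref{lem:BoffsharpS3} shows that their off-diagonal contribution is merely nonnegative, and sharply so: there are choices of $\rho,\psi$ for which $\cB^{\rm off}/\cA\to 0$. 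Hence no positive multiple of $\cA$ can be extracted from the cross terms, and your expectation that the constant emerges ``from combining the per-edge rate with the count $2(n-2)$ of adjacent transpositions after completing the square'' cannot be realized. This is the structural difference from the Bernoulli--Laplace model, where genuine triangles do exist and supply the dominant $\frac{n-2}{2d}$ contribution.

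What actually happens is simpler and inverts your attribution: the entire curvature constant comes from the single-edge (on-diagonal) term you set aside, namely $\cB^{\rm on}(\rho,\psi)\geq\frac{2}{d}\cA(\rho,\psi)$ with $d=\binom{n}{2}$ (Lemma \ref{lem:diag}), and $\frac{2}{d}=\frac{4}{n(n-1)}$ is exactly the claimed bound. The off-diagonal terms only need to be shown nonnegative, which the paper does by decomposing all incident non-identical edge pairs into squares --- each commuting pair lies in a unique square, each overlapping pair in exactly two squares --- and applying Lemma \ref{lem:square}, which writes $\cB^{\rm off}_\Box$ as a sum of manifestly nonnegative terms (an alternating-sum square plus logarithmic-mean deficits $\sD$). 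This also dissolves the difficulty you flag about the coupling of the logarithmic means: no uniform quadratic-form inequality over all weight configurations is required, only the pointwise four-point inequality \eqref{eq:theta2} for $\theta$. As it stands, your proposal has a genuine gap at its central step, and the route chosen to close it cannot work in this graph.
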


As mentioned above the mixing time for $\cS_n$ has been obtained by
Diaconis and Shahshahani in \cite{DiSh81}.  The {\em coarse Ricci
  curvature} of the random transposition model can be estimated from
above and below in a straightforward manner using contraction of the
$W_1$-transportation distance (as observed by Gozlan et al
\cite{GMPRST13}, while very likely in the folklore) and shown to be of
order $n^{-2}$. The modified logarithmic Sobolev inequality was
studied by Goel \cite{Goe04}, Gao--Quastel \cite{GQ03} and
Bobkov--Tetali \cite{BT06}, who proved that
\begin{align*}
  \frac{1}{n-1}~\leq~\a~\leq~\frac{4}{n-1}\;,
\end{align*}
where the upper bound comes from the known spectral gap
$\lambda=\frac2{n-1}$. Thus $\a$ and $\lambda$ are both of order
$n^{-1}$. Combining this estimate with Theorem \ref{thm:RT-main}, we
infer that $4/(n^2-n)\leq\kappa\leq 2/(n-1)$. It remains an open
question to determine the correct order.

\section{Preliminaries on Ricci curvature}
\label{sec:prelim}

We briefly recall some preliminaries on the notion of Ricci curvature
for finite Markov chains following \cite{Ma11,EM12,EM13}.

\subsection{Ricci curvature for Markov triples}
\label{sec:def}

Let $L$ be the generator of a continuous time Markov chain on a finite
set $\cX$. Thus the action of $L$ on functions $\psi : \cX \to \R$ is
given by
\begin{align*}
 L \psi(x) := \sum_{y \in \cX} Q(x,y) (\psi(y) - \psi(x))\;, \qquad x \in \cX\;,
\end{align*}
with $Q(x,y) \geq 0$ for all $x \neq y$.  Let $\pi$ be a reversible
measure for $L$, i.e. the detailed balance conditions
\begin{align*}
 Q(x,y) \pi(x) = Q(y,x) \pi(y)
\end{align*}
hold for all $x \neq y$. We refer to the triple $(\cX, Q, \pi)$ as a  Markov triple.
 
Let 
\begin{align*}
 \PX := \Big\{ \, \rho : \cX \to \R_+ \ | \ 
     \sum_{x \in \cX} \pi(x) \rho(x)  = 1 \, \Big\}
\end{align*}
be the set of \emph{probability densities} (with respect to $\pi$) on
$\cX$. The subset consisting of those probability densities that are
strictly positive is denoted by $\PXs$.
 
A crucial role in this paper is played by the nonlocal transport
metric $\cW$ on $\PX$, which was introduced in \cite{Ma11,Mie11a} (see
also \cite{CHLZ11} for closely related metrics). In several ways, this
metric can be regarded as a natural discrete analogue of the
2-Wasserstein metric \cite{GM12}. The definition is based on a
discrete analogue of the Benamou-Brenier formula: for $\rho_0, \rho_1
\in \PX$ we set
\begin{align*}
 \cW(\rho_0, \rho_1)^2
   := \inf_{\rho, \psi} 
   \bigg\{  \frac12   \int_0^1 
  \sum_{x,y\in \cX} (\psi_t(x) - \psi_t(y))^2
    		 \hat\rho_t(x,y)  Q(x,y)\pi(x)
      \dd t 
          \bigg\}\;,
\end{align*}
where the infimum runs over all piecewise smooth curves $\rho :
[0,1] \to \PX$ and all $\psi : [0,1] \times \cX \to \R$ satisfying the
discrete ``continuity equation''
\begin{align} \label{eq:cont}
 \begin{cases}
 \displaystyle\ddt \rho_t(x) 
   + \displaystyle\sum_{y \in \cX} ( \psi_t(y) - \psi_t(x) ) \hat\rho_t(x,y) Q(x,y) ~=~0\;,\qquad x \in \cX\;, \\ 
  \rho(0) = \rho_0\;, \qquad \rho(1)  = \rho_1\;.
 \end{cases}
\end{align}
Here, given $\rho \in \PX$, we write $\hat\rho(x,y) :=
\theta\big(\rho(x),\rho(y)\big)$, where $\theta(r,s) = \int_0^1
r^{1-p} s^p \dd p$ is the logarithmic mean of $r$ and $s$.

The relative entropy (with respect to $\pi$) of $\rho \in \PX$ is
defined as usual by
\begin{align} \label{eq:entropy}
 \cH(\rho) = \sum_{x \in \cX} \pi(x) \rho(x) \log \rho(x)\;.
\end{align}
It turns out that the metric $\cW$ is induced by a Riemannian
structure on the interior $\PXs$ of $\PX$. Moreover, every pair of
densities $\rho_0, \rho_1 \in \PX$ can be joined by a constant speed
geodesic, i.e., there exists a curve $\rho : [0,1] \to \PX$ connecting
$\rho_0$ and $\rho_1$ satisfying $\cW(\rho_s, \rho_t) = |t-s|
\cW(\rho_0, \rho_1)$ for all $s, t \in [0,1]$. Therefore, the
following definition in the spirit of Lott--Sturm--Villani
\cite{LV09,S06} is meaningful.

\begin{definition}[Discrete Ricci curvature]\label{def:intro-Ricci}
  We say that a Markov triple $(\cX, Q, \pi)$ has \emph{Ricci
    curvature bounded from below by $\kappa \in \R$} if for any
  constant speed geodesic $\{\rho_t\}_{t \in [0,1]}$ in $(\PX, \cW)$
  we have
  \begin{align*}
    \cH(\rho_t) \leq (1-t) \cH(\rho_0) + t \cH(\rho_1) -
    \frac\kappa{2} t(1-t) \cW(\rho_0, \rho_1)^2\;.
  \end{align*}
  In this case, we write $\Ric(\cX, Q, \pi) \geq \kappa$, or simply $\Ric(Q) \geq \kappa$.
\end{definition}

\subsection{Equivalent conditions for Ricci curvature}
\label{sec:BA}

To proceed further, we introduce the following convenient notation.
For a function $\phi: \cX \to \R$ we consider the \emph{discrete
  gradient} $\nabla \phi \in \R^{\cX \times \cX}$ defined by
\begin{align*}
 \nabla \phi(x,y) := \phi(y) - \phi(x)\;.
\end{align*}
For $\Psi \in \R^{\cX \times \cX}$ we consider the \emph{discrete
  divergence} $\nabla \cdot \Psi \in \R^\cX$ defined by
\begin{align*}
( \nabla \cdot \Psi )(x) 
  := \frac12 \sum_{y \in \cX}  (\Psi(x,y) - \Psi(y,x) ) Q(x,y) \in \R\;.
\end{align*}
With this notation we have
$L :=  \nabla \cdot \nabla$, and the integration by parts formula
\begin{align*}
 \ip{\nabla \psi, \Psi}_\pi = -\ip{\psi,\nabla\cdot \Psi}_\pi
\end{align*}
holds. Here we write, for $\phi,\psi \in \R^\cX$ and $\Phi,\Psi \in \R^{\cX \times \cX}$,
\begin{align*}
\ip{\phi, \psi}_\pi &= \sum_{x \in \cX} \phi(x) \psi(x) \pi(x)\;, \\
\ip{\Phi, \Psi}_\pi &= \frac12 \sum_{x,y \in \cX} 
         \Phi(x,y) \Psi(x,y) Q(x,y) \pi(x)\;.
\end{align*}

An important role in our analysis is played by the quantity
$\cB(\rho, \psi)$, which is defined for $\rho \in \R_+^\cX$ and $\psi \in
\R^\cX$ by
\begin{equation}\begin{aligned}\label{eq:B}
  \cB(\rho, \psi) 
  := &\ \frac12
  \bip{\hL\rho\, \cdot\, \nabla \psi, \nabla \psi }_\pi
    - \bip{\hrho  \cdot\,\nabla \psi\ ,\, \nabla L\psi}_\pi 
\\  = &\ 
  \frac14
       \sum_{x,y,z \in \cX}
         \big(\psi(x) - \psi(y)\big)^2 
        \Big( \hrho_1(x,y) \big(\rho(z) - \rho(x) \big)Q(x,z)
 \\& \qquad
    + \hrho_2(x,y) \big(\rho(z) - \rho(y) \big)Q(y,z)\Big)
       Q(x,y) \pi(x)
  \\ &  - 
   \frac12
       \sum_{x,y,z \in \cX}
         \Big( Q(x,z) \big( \psi(z) - \psi(x) \big)
           - Q(y,z)   \big( \psi(z) -\psi(y)\big)
             \Big)
  \\& \qquad \times       \big(\psi(x) - \psi(y)\big)
        \hrho(x,y)Q(x,y)\pi(x)\;,
\end{aligned}\end{equation}
where
\begin{align*}
\hat\rho(x,y) &:= \theta(\rho(x),\rho(y))\;,\\
\hrho_i(x,y) &:= \partial_i\theta(\rho(x),\rho(y))\;,\quad i=1,2\;,\\
 \hL \rho(x,y) &:=  
 \hrho_1(x,y) L \rho(x) 
  + \hrho_2(x,y) L \rho(y)\;.
\end{align*}
The term $\cB(\rho, \psi)$ is reminiscent of the Bochner formula in
Riemannian geometry, which asserts that $\frac12\Delta|\nabla \psi|^2
- \ip{\nabla \Delta \psi, \nabla \psi} = \Ric(\nabla \psi, \nabla
\psi) + \|D^2 \psi\|_{HS}^2$.

Let us further introduce the quantity
\begin{align*}
 \cA(\rho,\psi) := 
  \ip{\hat\rho\cdot\nabla \psi,\nabla\psi}
  = \frac12\sum_{x,y\in \cX} (\psi(y)-\psi(x))^2\hrho(x,y) Q(x,y) \pi(x)\ ,
\end{align*}
for $\rho\in\PX$ and $\psi \in \R^{\cX}$.

The following result from \cite{EM12} provides a
reformulation of Ricci lower bounds in terms of $\cB$ and $\cA$.

\begin{theorem}[Characterisation of Ricci curvature bounds]\label{thm:Ric-equiv}
  Let $\kappa \in \R$. For an irreducible and reversible Markov kernel
  $(\cX, Q, \pi)$ the following assertions are equivalent:
\begin{enumerate}
\item $\Ric(\cX, Q, \pi) \geq \kappa$\;;
\item For all $\rho \in \PXs$ we have 
\begin{align*}
\Hess \cH(\rho) \geq \kappa\;;
\end{align*}
\item For all $\rho \in \PXs$ and $\psi \in \R^\cX$ we have
\begin{align*}
 \cB(\rho, \psi) \geq \kappa \cA(\rho, \psi)\;.
\end{align*}
\end{enumerate}
\end{theorem}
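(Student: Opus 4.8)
The plan is to pass through the Riemannian picture underlying $\cW$. Following \cite{Ma11}, on the interior $\PXs$ the metric $\cW$ is induced by a Riemannian tensor: a tangent vector at $\rho$ is a function $\dot\rho$ with $\sum_x\pi(x)\dot\rho(x)=0$, and since $\hrho>0$ on $\PXs$, irreducibility makes the discrete continuity equation a bijection between such $\dot\rho$ and potentials $\psi\in\R^\cX$ modulo constants, via $\dot\rho=-\nabla\cdot(\hrho\,\nabla\psi)$. Under this identification the metric tensor is exactly $\cA(\rho,\psi)=\ip{\hrho\,\nabla\psi,\nabla\psi}_\pi$, so the length of a curve is $\int_0^1\cA(\rho_t,\psi_t)^{1/2}\dd t$ and constant-speed geodesics are the minimizers of the Benamou--Brenier action $\int_0^1\cA(\rho_t,\psi_t)\dd t$. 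I would first record that such geodesics are smooth and solve the Hamiltonian system associated with $H(\rho,\psi)=\tfrac12\cA(\rho,\psi)$, namely the continuity equation together with the discrete Hamilton--Jacobi equation $\ddt\psi_t(x)=-\tfrac12\sum_y(\psi_t(y)-\psi_t(x))^2\hrho_1(x,y)Q(x,y)$.

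Granting this structure, the equivalence (1)$\Leftrightarrow$(2) is the standard principle that a smooth functional on a Riemannian manifold is $\kappa$-geodesically convex exactly when its Hessian dominates $\kappa$ times the metric: along any geodesic $\{\rho_t\}$ the covariant acceleration vanishes, so $\ddtt\cH(\rho_t)=\Hess\cH(\rho_t)[\dot\rho_t,\dot\rho_t]$, and the convexity inequality of Definition~\ref{def:intro-Ricci} is equivalent to the pointwise bound $\ddtt\cH(\rho_t)\geq\kappa\,\cA(\rho_t,\psi_t)$ along all geodesics.

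The analytic heart is (2)$\Leftrightarrow$(3), which I would settle by computing this second derivative and recognising $\cB$. Starting from $\ddt\cH(\rho_t)=\ip{\log\rho_t,\dot\rho_t}_\pi=-\ip{L\rho_t,\psi_t}_\pi$, where the second equality uses the logarithmic-mean identity $\hrho(x,y)\,\nabla\log\rho(x,y)=\nabla\rho(x,y)$ together with integration by parts, I differentiate once more to get $\ddtt\cH(\rho_t)=-\ip{L\dot\rho_t,\psi_t}_\pi-\ip{L\rho_t,\dot\psi_t}_\pi$. Into the first summand I substitute the continuity equation and use self-adjointness of $L$ with integration by parts to obtain $-\ip{\hrho\,\nabla\psi,\nabla L\psi}_\pi$; into the second I substitute the Hamilton--Jacobi equation and symmetrise the resulting triple sum, using reversibility $Q(x,y)\pi(x)=Q(y,x)\pi(y)$ and the symmetry of $\theta$, which collapses the $\hrho_1$-term and its reflected counterpart into $\tfrac12\ip{\hL\rho\cdot\nabla\psi,\nabla\psi}_\pi$. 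Adding the two contributions gives precisely $\ddtt\cH(\rho_t)=\cB(\rho_t,\psi_t)$, so that $\Hess\cH(\rho)[\psi,\psi]=\cB(\rho,\psi)$ while the metric is $\cA(\rho,\psi)$, and the bound $\Hess\cH\geq\kappa$ is literally $\cB\geq\kappa\cA$.

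I expect two main difficulties. The computational obstacle is the second-derivative calculation: it depends on having the geodesic equations available, and the reversibility-driven symmetrisations that fold the $\hrho_2$-terms into $\hrho_1$-terms (and $L\rho(y)$ into $L\rho(x)$) must be executed precisely to recover the exact form of $\cB$. The analytic obstacle lies in (1)$\Leftrightarrow$(2): the Hessian bound is an interior statement on $\PXs$, whereas geodesic convexity is demanded along all geodesics in $\PX$, and geodesics joining interior points need neither remain in $\PXs$ nor is $\cH$ smooth up to $\partial\PX$. I would bridge this by first proving the convexity inequality for geodesics staying in the interior and then extending to arbitrary endpoints by approximation, using lower semicontinuity of $\cH$ and continuity of $\cW$, with irreducibility guaranteeing finiteness of $\cW$ so that the approximation is meaningful.
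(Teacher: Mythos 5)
The paper itself gives no proof of Theorem \ref{thm:Ric-equiv}: it is imported verbatim from \cite{EM12}, so your attempt can only be measured against the argument given there. Your computational core matches that argument and is correct: the identification of tangent vectors with potentials $\psi$ (modulo constants) through the continuity equation, the Hamilton--Jacobi equation $\ddt\psi_t(x)=-\tfrac12\sum_y(\psi_t(y)-\psi_t(x))^2\hrho_1(x,y)Q(x,y)$ for geodesics, and the computation showing $\ddtt\cH(\rho_t)=\cB(\rho_t,\psi_t)$ along solutions of this system (the symmetrisation via $\hrho_2(x,y)=\hrho_1(y,x)$ and detailed balance, which folds the $\hL\rho$-term, is exactly right). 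This yields $\Hess\cH(\rho)[\psi,\psi]=\cB(\rho,\psi)$ with metric tensor $\cA(\rho,\psi)$, hence (2)$\Leftrightarrow$(3), and the implication (1)$\Rightarrow$(2) via short-time interior geodesics is also sound modulo the standard verification that short Riemannian geodesics in $\PXs$ are minimizing for $\cW$.

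The genuine gap is in your bridge from (2)/(3) back to (1). You propose to prove the convexity inequality along geodesics that stay in $\PXs$ and then to handle general geodesics ``by approximation, using lower semicontinuity of $\cH$ and continuity of $\cW$''. This cannot work as stated, for two reasons. First, as you yourself observe, geodesics with interior endpoints are not known to remain in $\PXs$, so your base case does not even cover all geodesics between interior densities, and approximating the endpoints does not help because the approximating pairs suffer from the same defect. Second, Definition \ref{def:intro-Ricci} demands the inequality along \emph{every} constant-speed geodesic, and geodesics in $(\PX,\cW)$ are not unique; a limiting argument produces at best \emph{some} geodesic between $\rho_0$ and $\rho_1$ along which the inequality holds (weak displacement convexity), never the inequality along a prescribed boundary-touching geodesic. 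This is precisely why \cite{EM12} takes a different route, following the Eulerian strategy of Daneri--Savar\'e: one shows that the Bochner-type bound (3) implies the evolution variational inequality
\begin{align*}
\ddtr\, \frac12\cW\big(e^{tL}\rho,\sigma\big)^2+\frac{\kappa}{2}\,\cW\big(e^{tL}\rho,\sigma\big)^2 ~\leq~ \cH(\sigma)-\cH\big(e^{tL}\rho\big)
\end{align*}
for the heat flow, whose instantaneous regularization into $\PXs$ keeps all computations in the interior, and then invokes the general metric principle that existence of an EVI$_\kappa$-flow for $\cH$ forces $\kappa$-convexity of $\cH$ along every geodesic. To repair your proof you would need to replace the approximation step by this (or an equivalent) heat-flow regularization argument.
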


The equivalence between (1) and (2) shows equivalence of a non-smooth
and a smooth notion of convexity. This equivalence is non-trivial,
since the Riemannian metric is degenerate at the boundary. Assertion
(3) is an explicit reformulation of (2). The inequality in (3) can be
seen as a discrete analogue of Bochner's inequality.

\section{A simple criterion for Ricci curvature bounds}
\label{sec:crit}

Here we present a combinatorial method for controlling the quantity
$\cB$ from \eqref{eq:B}. We will first study this quantity in detail
in the case where the Markov chain is simple random walk on a triangle
or on a square. The resulting bounds will then be applied to concrete
examples with sufficient symmetry in which the underlying graph can be
decomposed into squares and triangles.

\subsection{Decomposition of $\cB(\rho, \psi)$}
Let us consider the natural graph structure $(\cX,E)$ associated with
the kernel $Q$, where the set of edges is defined by
\begin{align*}
  E~:=\big\{\{x,y\}\ :\ Q(x,y)>0\big\}\;.
\end{align*}
Then we can rewrite the quantity $\cA$ as
\begin{align*}
  \cA(\rho,\psi)~=~\sum\limits_{e\in E}a(e)c(e)\;,
\end{align*}
where for $e=\{x,y\}$ we set $c(e)=Q(x,y)\pi(x)$ and 
\begin{align*}
  a(e)~=~\big(\psi(y)-\psi(x)\big)^2\hrho(x,y)\;.
\end{align*}
Given two edges $e,e'\in E$ we write $e\sim e'$ iff they are adjacent
or identical, i.e., iff $e=\{x,y\},e'=\{x,z\}$ for some $x,y,z\in
\cX$.  Then we can rewrite the quantity $\cB$ as a sum over pairs of
adjacent edges. It will be convenient to write
$c(x,y):=Q(x,y)\pi(x)$. Note that the reversibility assumption implies
that $c(x,y) = c(y,x)$.

\begin{lemma}[Reformulation of $\cB(\rho, \psi)$]\label{lem:edgesum1}
  For all $\rho\in\R_+^\cX$ and $\psi\in\R^{\cX}$ we have
  \begin{align}\label{eq:edgesum1}
    \cB(\rho,\psi)~=~\sum\limits_{e,e'\in E, e\sim e'}b(e,e')\;,
  \end{align}
  where for $e = \{x,y\}$ and $e' = \{x,z\}$ with $y\neq z$ we set
  \begin{align*}
    b(e,e')~&:=~
      \Big[\frac12\big(\psi(x) - \psi(y)\big)^2\hat\rho_1(x,y)\big(\rho(z) - \rho(x) \big)  \\
      &\qquad + \big( \psi(y) - \psi(x) \big) \big( \psi(z)
      -\psi(x)\big)\hrho(x,y)\Big]Q(x,z)c(x,y)\;,
  \end{align*}
   while for $y=z$ we set
  \begin{align*}
    b(e,e)~&:=~
      \frac12 \big(\psi(x) - \psi(y)\big)^2
          \bigg[ 
          2\hrho(x,y)\big[Q(x,y)+Q(y,x)\big]
\\&   \qquad\qquad\qquad           + \hrho_1(x,y) \big(\rho(y) - \rho(x) \big)Q(x,y)
                + \hrho_2(x,y) \big(\rho(x) - \rho(y) \big)Q(y,x)
   \bigg]c(x,y)\;.
  \end{align*}
\end{lemma}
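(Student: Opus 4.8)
The plan is to start directly from the expression \eqref{eq:B} for $\cB(\rho,\psi)$, which is already displayed as the sum of two triple sums over $x,y,z\in\cX$: a \emph{Bochner-type} term carrying the prefactor $\tfrac14$ and quadratic in the $\rho$-increments, and a \emph{cross} term carrying the prefactor $-\tfrac12$ and bilinear in the $\psi$-increments. In both triple sums the factor $Q(x,y)\pi(x)=c(x,y)$ singles out an edge $e=\{x,y\}$, while the remaining rate ($Q(x,z)$ or $Q(y,z)$) singles out a second edge sharing a vertex with $e$. The goal is therefore purely organisational: rewrite each triple sum so that the summand becomes manifestly indexed by an ordered pair of adjacent edges, and then read off that the integrand coincides with $b(e,e')$ as defined in the statement.

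\textbf{Collapsing the two half-sums.} Each triple sum splits into a piece where the second edge is hinged at $x$ (coming from $Q(x,z)$, or from $\hrho_1$) and a piece hinged at $y$ (coming from $Q(y,z)$, or from $\hrho_2$). First I would show these two pieces are equal after the relabelling $x\leftrightarrow z$ is replaced by $x\leftrightarrow y$ in the summation. The two ingredients are reversibility, $c(x,y)=Q(x,y)\pi(x)=Q(y,x)\pi(y)=c(y,x)$, and the symmetry of the logarithmic mean $\theta$, which gives $\hrho(x,y)=\hrho(y,x)$ together with $\hrho_1(y,x)=\partial_1\theta(\rho(y),\rho(x))=\partial_2\theta(\rho(x),\rho(y))=\hrho_2(x,y)$. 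Using these, relabelling $x\leftrightarrow y$ in the $y$-hinged piece turns it into the $x$-hinged piece, so each triple sum equals twice its $x$-hinged half. This collapses the prefactors $\tfrac14\mapsto\tfrac12$ and $-\tfrac12\mapsto-1$, and leaves both terms in the common normal form $\sum_{x,y,z}(\cdots)\,Q(x,z)\,c(x,y)$.

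\textbf{Matching the off-diagonal terms.} A triple $(x,y,z)$ with $Q(x,y)>0$, $Q(x,z)>0$ and $y\neq z$ is in bijection with an ordered pair of distinct adjacent edges $e=\{x,y\}$, $e'=\{x,z\}$ sharing the vertex $x$, so $\sum_{x,y,z}$ restricted to $y\neq z$ is exactly $\sum_{e\sim e',\,e\neq e'}$. Here I would simply check term by term that the collapsed Bochner integrand reproduces the first summand $\tfrac12(\psi(x)-\psi(y))^2\hrho_1(x,y)(\rho(z)-\rho(x))Q(x,z)c(x,y)$ of $b(e,e')$, and that the collapsed cross integrand reproduces the second summand $(\psi(y)-\psi(x))(\psi(z)-\psi(x))\hrho(x,y)Q(x,z)c(x,y)$ (the sign working out because $-(\psi(z)-\psi(x))(\psi(x)-\psi(y))=(\psi(y)-\psi(x))(\psi(z)-\psi(x))$). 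The degenerate case $z=x$ contributes nothing, since the $\rho$-increment vanishes in the Bochner term and the $\psi$-increment vanishes in the cross term, consistent with $\{x,z\}\notin E$.

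\textbf{The diagonal, and the main obstacle.} The remaining case $y=z$ is where the bookkeeping must be done carefully, and it is the only genuinely delicate point. The triple sum with $y=z$ runs over \emph{ordered} pairs $(x,y)$ with $Q(x,y)>0$, so each unordered edge $e=\{x,y\}$ is visited twice, once as $(x,y)$ and once as $(y,x)$; by contrast the right-hand side of \eqref{eq:edgesum1} contains a single diagonal contribution $b(e,e)$ per edge. The plan is to collect the two orientations: writing $D(x,y)$ for the $y=z$ contribution of the (already collapsed) Bochner and cross terms, I would verify that $D(x,y)+D(y,x)=b(e,e)$, once more invoking $c(x,y)=c(y,x)$, $\hrho(x,y)=\hrho(y,x)$ and $\hrho_1(y,x)=\hrho_2(x,y)$ to combine the $Q(x,y)$- and $Q(y,x)$-pieces into the bracket $2\hrho(x,y)[Q(x,y)+Q(y,x)]+\hrho_1(x,y)(\rho(y)-\rho(x))Q(x,y)+\hrho_2(x,y)(\rho(x)-\rho(y))Q(y,x)$ appearing in $b(e,e)$. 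Summing the off-diagonal and diagonal matches then yields \eqref{eq:edgesum1}. The substantive work is thus concentrated in the factor-of-two collapse and in reassembling the two edge orientations into $b(e,e)$; everything else is direct term comparison.
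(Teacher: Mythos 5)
Your proposal is correct and takes essentially the same route as the paper: the paper's proof consists exactly of your ``collapse'' step (using $\hrho(x,y)=\hrho(y,x)$, $\hrho_1(x,y)=\hrho_2(y,x)$ and detailed balance $Q(x,y)\pi(x)=Q(y,x)\pi(y)$ to reduce each triple sum in \eqref{eq:B} to twice its $x$-hinged half), after which it declares the remaining term-matching obvious. Your extra verifications --- the bijection between triples with $y\neq z$ and ordered pairs of distinct adjacent edges, and the diagonal bookkeeping $D(x,y)+D(y,x)=b(e,e)$ --- are accurate and simply spell out what the paper leaves implicit.
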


\begin{proof}
  First note that using the fact that $\hat\rho(x,y)=\hat\rho(y,x)$,
  $\hat\rho_1(x,y)=\hat\rho_2(y,x)$ and the detailed balance condition
  $Q(x,y)\pi(x)=Q(y,x)\pi(y)$ we can rewrite \eqref{eq:B} in the form
  \begin{align*}
    \cB(\rho,\psi)~&=~\frac12
     \sum_{x,y,z \in \cX}
         \big(\psi(x) - \psi(y)\big)^2 
       \hat\rho_1(x,y)\big(\rho(z) - \rho(x) \big)Q(x,z)Q(x,y)\pi(x)\\
&\qquad +
     \sum_{x,y,z \in \cX}
         \big( \psi(y) - \psi(x) \big)( \psi(z) -\psi(x)\big)
        \hrho(x,y)Q(x,z)Q(x,y)\pi(x)\;.
  \end{align*}
  Now the assertion is obvious.
\end{proof}

Given a subgraph $G=(\cY,F)$ of $(\cX,E)$ and two functions $\rho\in
\R_+^\cX$ and $\psi\in\R^{\cX}$ we denote by $\rho^G,\psi^G$ their
restrictions to $\cY$. Moreover, we set $\cA_G(\rho,\psi)$ and
$\cB_G(\rho,\psi)$ to be the quantities $\cA,\cB$ calculated in the
weighted graph $(\cY,F)$ with the functions $\rho^G,\psi^G$. More
precisely,
\begin{align*}
  \cA_G(\rho,\psi)~&:=~\sum\limits_{e\in F}a(e)c(e)\;, \\
  \cB_G(\rho,\psi)~&:=~\sum\limits_{e,e'\in F, e\sim e'}b(e,e')\;.
\end{align*}
Further, it turns out to be useful to seperate the contribution to
$\cB$ coming from identical edges (``on-diagonal entries'') and from
adjacent edges (``off-diagonal entries''). Thus we set
\begin{align*}
  \cB^{\rm on}_G(\rho,\psi)~&:=~\sum\limits_{e\in F}b(e,e)\;,\\
  \cB^{\rm off}_G(\rho,\psi)~&:=~\sum\limits_{e,e'\in F, e\sim e',e\neq e'}b(e,e')\;.
\end{align*}

\subsection{An on-diagonal bound for $d$-regular graphs}

From now on let us assume that the Markov chain is simple random walk
on a $d$-regular graph $(\cX,E)$, i.e., 
\begin{align*}
  Q(x,y)~=~
  \begin{cases}
    \frac{1}{d}\;,& \{x,y\}\in E\;,\\
    0\;,& \text{otherwise}\;.
  \end{cases}
\end{align*}
The uniform probability measure given by $\pi(x) = \mu = |\cX|^{-1}$
for all $x \in \cX$ satisfies the detailed balance condition.

The following results is a general bound on the on-diagonal part of
$\cB$. In the proof we shall use the following elementary properties
of the logarithmic mean:
 \begin{align}\label{eq:theta1}
   s\partial_1\theta(s,t) + t\partial_2(s,t)~&=~\theta(s,t)\;,\\\label{eq:theta2}
   u\partial_1\theta(s,t) + v\partial_2(s,t)~&\geq~\theta(u,v)\;,
 \end{align}
for all $s,t,u,v>0$. A proof can be found in \cite[Lemma 2.2]{EM12}.

\begin{lemma}[On-diagonal bound]\label{lem:diag}
  For every subgraph $G\subset (\cX,E)$ and all $\rho\in\R_+^\cX$ and
  $\psi\in\R^\cX$ we have
  \begin{align}\label{eq:diag}
    \cB^{\rm on}_G(\rho,\psi)~\geq~\frac2d \cA_G(\rho,\psi)\;.
  \end{align}
\end{lemma}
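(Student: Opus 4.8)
The plan is to prove \eqref{eq:diag} \emph{edge by edge}. Since both sides are sums over the edges of $F$ --- explicitly $\cB^{\rm on}_G(\rho,\psi)=\sum_{e\in F}b(e,e)$ while $\frac2d\cA_G(\rho,\psi)=\frac2d\sum_{e\in F}a(e)c(e)$ --- it suffices to establish the local inequality $b(e,e)\geq\frac2d a(e)c(e)$ for each fixed edge $e=\{x,y\}\in F$. The point is that the on-diagonal part is a genuinely local quantity, so no interaction between edges needs to be controlled.

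To this end I would first specialize the formula for $b(e,e)$ from Lemma \ref{lem:edgesum1} to the $d$-regular situation. Since $Q(x,y)=Q(y,x)=\frac1d$ on every edge, the bracket collapses and one obtains
\[
  b(e,e)=\frac{1}{2d}\big(\psi(x)-\psi(y)\big)^2\Big[4\hrho(x,y)+\hrho_1(x,y)\big(\rho(y)-\rho(x)\big)+\hrho_2(x,y)\big(\rho(x)-\rho(y)\big)\Big]c(e)\;.
\]
Recalling $a(e)=(\psi(y)-\psi(x))^2\hrho(x,y)$, the contribution of the first term in the bracket is exactly $\frac2d a(e)c(e)$. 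After this cancellation the edge inequality reduces to the nonnegativity of the remaining factor, namely
\[
  \hrho_1(x,y)\big(\rho(y)-\rho(x)\big)+\hrho_2(x,y)\big(\rho(x)-\rho(y)\big)\geq 0\;.
\]

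Writing $s=\rho(x)$, $t=\rho(y)$ and using $\hrho_i(x,y)=\partial_i\theta(s,t)$, this last claim reads
\[
  \big[\,t\,\partial_1\theta(s,t)+s\,\partial_2\theta(s,t)\,\big]-\big[\,s\,\partial_1\theta(s,t)+t\,\partial_2\theta(s,t)\,\big]\geq 0\;.
\]
Here I would invoke the two properties of the logarithmic mean recorded in \eqref{eq:theta1}--\eqref{eq:theta2}: the second bracket equals $\theta(s,t)$ by the identity \eqref{eq:theta1}, while the first bracket is $\geq\theta(t,s)=\theta(s,t)$ upon applying the inequality \eqref{eq:theta2} with the choice $u=t$, $v=s$ and using the symmetry $\theta(t,s)=\theta(s,t)$. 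Subtracting yields the desired inequality, and summing over $e\in F$ gives \eqref{eq:diag}.

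The computation is entirely local, so there is no serious analytic difficulty; the two points deserving attention are the bookkeeping in reducing $b(e,e)$ to its $d$-regular form, and the judicious choice $u=t,\,v=s$ in \eqref{eq:theta2}, which is precisely what makes the two logarithmic-mean terms cancel against the diagonal identity \eqref{eq:theta1}. The only technical caveat is that \eqref{eq:theta1}--\eqref{eq:theta2} are stated for strictly positive arguments, so the degenerate case where $\rho(x)$ or $\rho(y)$ vanishes would be disposed of separately by a continuity argument.
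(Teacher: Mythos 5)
Your proof is correct and is essentially the paper's own argument: the paper likewise writes $\cB^{\rm on}_G$ as the sum over edges of the $d$-regular specialization of $b(e,e)$, peels off the $4\hrho(x,y)$ term to produce $\frac2d\cA_G$, and disposes of the remainder $\hrho_1(x,y)(\rho(y)-\rho(x))+\hrho_2(x,y)(\rho(x)-\rho(y))\geq 0$ via \eqref{eq:theta1} and \eqref{eq:theta2} with exactly your choice $u=\rho(y)$, $v=\rho(x)$. The only cosmetic difference is that the paper carries out the estimate under a single sum over edges rather than isolating the per-edge inequality, so there is nothing substantive to add.
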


\begin{proof}
  Let us write $G=(Y,F)$. Using \eqref{eq:theta1} and \eqref{eq:theta2} we obtain
  \begin{align*}
    &\cB^{\rm on}_G(\rho,\psi)~\\
    ~&=~\frac{\mu}{d^2}\sum\limits_{\{x,y\}\in F}\frac12\Big(\psi(y)-\psi(x)\Big)^2\Big[\hrho_1(x,y)\big(\rho(y)-\rho(x)\big)
    + \hrho_2(x,y)\big(\rho(x)-\rho(y)\big) +4\hrho(x,y) \Big]\\
     &\geq~\frac{2\mu}{d^2 }\sum\limits_{\{x,y\}\in F}\Big(\psi(y)-\psi(x)\Big)^2\hrho(x,y)~=~\frac{2}{d}\cA_G(\rho,\psi)\;,
  \end{align*}
which is the desired bound.
\end{proof}

\subsection{Off-diagonal bounds for triangles and squares}

For the off-diagonal part, let us first consider the special cases
where the subgraph $G$ is a triangle or a square.

\begin{lemma}[Off-diagonal bound for triangles]\label{lem:triangle}
  Let $\triangle=(\cY,F)$ be a triangle subgraph of $(\cX,E)$,
  i.e., $\cY=\{x_1,x_2,x_3\}$ and
  $F=\big\{\{x_i,x_{i+1}\},~i=1,2,3\big\}$ for some distinct $x_i\in\cX$. Then,
for any $\rho\in\R_+^\cX$ and $\psi\in\R^\cX$   we have 
  \begin{align}\label{eq:trianle}
    \cB_\triangle^{\rm off}(\rho,\psi)~&\geq~
\frac{1}{2d} \cA_\triangle(\rho,\psi)\;.
  \end{align}
\end{lemma}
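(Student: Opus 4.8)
The plan is to expand $\cB_\triangle^{\rm off}$ as the sum of the six quantities $b(e,e')$ over ordered pairs of distinct adjacent edges of $\triangle$ and to compare it termwise with $\cA_\triangle$. Since we are dealing with simple random walk on a $d$-regular graph with uniform measure, in every $b(e,e')$ the factor $Q(x,z)c(x,y)$ equals $\mu/d^2$, while each $c(e)$ in $\cA_\triangle$ equals $\mu/d$. Dividing out $\mu/d^2$, the asserted bound \eqref{eq:trianle} reduces to the algebraic inequality $S_B \ge \tfrac12 S_A$, where $S_B$ collects the bracketed parts of the six $b(e,e')$ and $S_A = \sum_{e=\{x,y\}\in F}(\psi(y)-\psi(x))^2\hat\rho(x,y)$. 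To use the fact that the three edge-increments of $\psi$ around the triangle sum to zero, I would set $a := \psi(x_2)-\psi(x_1)$ and $b := \psi(x_3)-\psi(x_2)$, so that $\psi(x_3)-\psi(x_1)=a+b$, and rewrite everything in terms of $a$, $b$ and the logarithmic means $\theta_{ij}:=\theta(\rho(x_i),\rho(x_j))$.

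Each $b(e,e')$ naturally splits into a \emph{product term} $(\psi(y)-\psi(x))(\psi(z)-\psi(x))\hat\rho(x,y)$ and a \emph{derivative term} $\tfrac12(\psi(x)-\psi(y))^2\partial_1\theta(\rho(x),\rho(y))(\rho(z)-\rho(x))$. First I would sum the six product terms and group them by $\theta_{12}$, $\theta_{13}$, $\theta_{23}$; the cross-contributions from the two orderings at each vertex cancel, and I expect this part to collapse exactly to $a^2\theta_{12}+(a+b)^2\theta_{13}+b^2\theta_{23}=S_A$. This is the first key simplification: the off-diagonal product contribution already reproduces all of $\cA_\triangle$, so it remains to show that the derivative contribution $D$ satisfies $D\ge -\tfrac12 S_A$.

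For the derivative terms I would group the six summands by the common squared increment $a^2$, $b^2$, $(a+b)^2$; each group pairs a term based at one endpoint of an edge with one based at the other endpoint. Using the symmetry $\partial_1\theta(s,t)=\partial_2\theta(t,s)$ and the homogeneity identity \eqref{eq:theta1}, namely $\rho(x)\partial_1\theta+\rho(y)\partial_2\theta=\theta$ on the edge $\{x,y\}$, the coefficient of $a^2$ reduces to $\tfrac12\big[\rho(x_3)(\partial_1+\partial_2)\theta_{12}-\theta_{12}\big]$, and symmetrically for $b^2$ and $(a+b)^2$ with the third vertex's density. The $-\theta_{ij}$ pieces are then precisely the $+\tfrac12 S_A$ that is needed, leaving
\[
S_B-\tfrac12 S_A=\tfrac12\Big(a^2\rho(x_3)(\partial_1+\partial_2)\theta_{12}+b^2\rho(x_1)(\partial_1+\partial_2)\theta_{23}+(a+b)^2\rho(x_2)(\partial_1+\partial_2)\theta_{13}\Big).
\]
Since the logarithmic mean is nondecreasing in each argument we have $\partial_1\theta,\partial_2\theta\ge 0$, and as $\rho\ge 0$ every summand on the right is nonnegative, which proves the claim. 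I expect the only real difficulty to be bookkeeping: correctly matching each of the six ordered adjacent-edge pairs to its shared vertex and tracking the signs of the increments $a$, $b$, $a+b$ so that the product cross-terms cancel into $S_A$; the sole analytic input is the monotonicity of $\theta$ together with the Euler-type identity \eqref{eq:theta1}.
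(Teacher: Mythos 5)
Your proof is correct and is essentially the same as the paper's: both expand $\cB^{\rm off}_\triangle$ into the six terms $b(e,e')$, use the fact that the increments of $\psi$ around the triangle sum to zero so that the product terms produce exactly the $\cA_\triangle$-type sum, and control the derivative terms by combining the identity \eqref{eq:theta1} with $\partial_1\theta,\partial_2\theta\ge 0$ and $\rho\ge 0$. The only (cosmetic) difference is that you record the discarded nonnegative remainder as an exact identity, whereas the paper simply drops it in an inequality.
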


\begin{proof}
  For convenience we set $\rho_i=\rho(x_i)$ and
  $g_i=\psi(x_{i+1})-\psi(x_i)$ for $i=1,2,3$ with the convention that
  $x_0 = x_3$ and $x_4=x_1$. To simplify notation we write
  $\hrho_{i,j} = \hrho(x_i, x_j)$, $\hrho^1_{i,j} = \hrho_1(x_i, x_j)$
  and $\hrho^2_{i,j} = \hrho_2(x_i, x_j)$. It is readily verified that
  \begin{align*}
   \cB^{\rm off}_\triangle(\rho,\psi)~&=~
    \frac{\mu}{d^2}\sum\limits_{i=1}^3
    \frac12 g_i^2 \Big[\hrho^1_{i,i+1}(\rho_{i-1}-\rho_i) + \hrho^2_{i,i+1}(\rho_{i+2}-\rho_{i+1})\Big]
    -g_i(g_{i+1}+g_{i-1})\hrho_{i,i+1}\;.
  \end{align*}
  Using the inequality $\hrho^1_{ij} \geq 0$, the identity
  \eqref{eq:theta1}, and the fact that $g_1+g_2+g_3=0$, we estimate
    \begin{align*}
   \cB^{\rm off}_\triangle(\rho,\psi)~&\geq~
    \frac{\mu}{d^2}\sum\limits_{i=1}^3
    -\frac12 g_i^2 \hrho_{i,i+1}
    -g_i(g_{i+1}+g_{i-1})\hrho_{i,i+1}
\\&=   \frac{\mu}{2d^2}\sum\limits_{i=1}^3 g_i^2 \hrho_{i,i+1}~=~\frac1{2d}\cA_\triangle(\rho,\psi)\;,
  \end{align*}
which completes the proof.
\end{proof}

For $s,t,u,v > 0$ let $\sD(s,t;u,v) := u\partial_1\theta(s,t) +
v\partial_2(s,t) - \theta(u,v)$ be the deficit in the 4-point
inequality \eqref{eq:theta2}, thus $\sD(s,t;u,v) \geq 0$. To simplify
notation we will often write $\sD_{i,j}^{k,l}$ instead of
$\sD(\rho_i,\rho_j;\rho_k,\rho_l)$. The following result provides a
convenient representation of $\cB_\Box^{\rm off}$ as a sum of
nonnegative terms.

\begin{lemma}[Off-diagonal bound for squares]\label{lem:square}
  Let $\Box=(\cY,F)$ be a square subgraph of $(\cX,E)$,
  i.e., $\cY=\{x_1,x_2,x_3,x_4\}$ and
  $F=\big\{\{x_i,x_{i+1}\},~i=1,\cdots,4\big\}$ for some distinct $x_i\in\cX$. Then,
 for any $\rho\in\R_+^\cX$ and $\psi\in\R^\cX$ we have
  \begin{align*}
    \cB_\Box^{\rm off}(\rho,\psi)
     = \frac{\mu}{2 d^2}|AS|^2(\psi;\Box) \sum_{i = 1}^4 \hrho(x_i,x_{i+1}) 
       + \frac{\mu}{2d^2} \sum_{i=1}^4 \big(\psi(x_{i+1})-\psi(x_i)\big)^2 \sD_{i,i+1}^{i-1,i+2}
    ~\geq~0\;,
  \end{align*}
  where $|AS|(\psi;\Box) := |\psi(x_1) - \psi(x_2) + \psi(x_3) -
  \psi(x_4)|$ denotes the alternating sum of $\psi$ on $\Box$.
\end{lemma}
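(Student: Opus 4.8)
The plan is to compute $\cB_\Box^{\rm off}$ directly from its definition as the sum of the terms $b(e,e')$ over the eight ordered pairs of distinct adjacent edges of the $4$-cycle, and then reorganise this sum into the two manifestly nonnegative groups on the right-hand side. I adopt the conventions of the triangle lemma, writing $\rho_i = \rho(x_i)$, $g_i = \psi(x_{i+1}) - \psi(x_i)$, reading all indices modulo $4$ (so $x_0 = x_4$, $x_5 = x_1$, etc.), and abbreviating $h_i := \hrho(x_i,x_{i+1}) = \theta(\rho_i,\rho_{i+1})$. Since the chain is simple random walk on a $d$-regular graph, every factor $Q(x,z)c(x,y)$ equals $\mu/d^2$, so each $b(e,e')$ carries the common prefactor $\mu/d^2$. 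The first step is to split each $b(e,e')$ into its $\hrho_1$ summand and its \emph{cross} summand $(\psi(y)-\psi(x))(\psi(z)-\psi(x))\hrho(x,y)$, and to treat the two groups separately.

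For the cross part I would collect the two contributions in which a fixed edge $\{x_i,x_{i+1}\}$ supplies the factor $\hrho$; using $\hrho(x,y)=\hrho(y,x)$, the edge $e_i$ contributes $-h_i\,g_i(g_{i-1}+g_{i+1})$. Because $g_1+g_2+g_3+g_4=0$ around the cycle one has $g_{i-1}+g_{i+1}=-(g_i+g_{i+2})$, so the total cross contribution becomes $\tfrac{\mu}{d^2}\sum_{i=1}^4 h_i\,(g_i^2+g_ig_{i+2})$.

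Next I would treat the $\hrho_1$ part. Collecting the coefficient of $\tfrac12 g_i^2$, which receives one contribution from the common vertex $x_i$ and one from $x_{i+1}$ (the latter rewritten via $\hrho_1(x_{i+1},x_i)=\hrho_2(x_i,x_{i+1})$), gives
\[
  \partial_1\theta(\rho_i,\rho_{i+1})(\rho_{i-1}-\rho_i) + \partial_2\theta(\rho_i,\rho_{i+1})(\rho_{i+2}-\rho_{i+1}).
\]
Applying the identity \eqref{eq:theta1} at $(\rho_i,\rho_{i+1})$, I subtract and add $\theta(\rho_i,\rho_{i+1})=h_i$ to recognise this as $\sD_{i,i+1}^{i-1,i+2} + \theta(\rho_{i-1},\rho_{i+2}) - h_i$. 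The key combinatorial observation is that in a $4$-cycle the pair $\{x_{i-1},x_{i+2}\}$ is precisely the edge $e_{i+2}$ opposite to $e_i$, so $\theta(\rho_{i-1},\rho_{i+2})=h_{i+2}$. Hence the $\hrho_1$ part equals $\tfrac{\mu}{2d^2}\sum_i g_i^2\,\sD_{i,i+1}^{i-1,i+2} + \tfrac{\mu}{2d^2}\sum_i g_i^2(h_{i+2}-h_i)$, whose first summand is exactly the deficit term in the claim.

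Finally I would add the leftover $\tfrac{\mu}{2d^2}\sum_i g_i^2(h_{i+2}-h_i)$ to the cross contribution. After reindexing $\sum_i g_i^2 h_{i+2} = \sum_i g_{i+2}^2 h_i$ (valid since $i-2\equiv i+2$ mod $4$), the coefficient of each $h_i$ collapses to $g_i^2 + g_{i+2}^2 + 2g_ig_{i+2} = (g_i+g_{i+2})^2$, and since $g_i+g_{i+2}=\pm(\psi(x_1)-\psi(x_2)+\psi(x_3)-\psi(x_4))$ for every $i$, this is exactly $|AS|^2(\psi;\Box)$. This produces the stated identity, and nonnegativity is immediate from $h_i=\theta(\rho_i,\rho_{i+1})\geq 0$ and $\sD\geq 0$ (the deficit in \eqref{eq:theta2}). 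The main obstacle is the bookkeeping in the $\hrho_1$ part: one must correctly merge the two orientations feeding each $g_i^2$, identify the deficit after invoking \eqref{eq:theta1}, and spot the opposite-edge identity $\theta(\rho_{i-1},\rho_{i+2})=h_{i+2}$; once these are secured, the concluding completion of the square is routine.
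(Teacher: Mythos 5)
Your proof is correct and takes essentially the same route as the paper: both expand $\cB^{\rm off}_\Box$ via the edge-pair reformulation of Lemma \ref{lem:edgesum1}, then use \eqref{eq:theta1}, the deficit in \eqref{eq:theta2}, the cycle identity $g_1+g_2+g_3+g_4=0$, and the identification of $\theta(\rho_{i-1},\rho_{i+2})$ with the logarithmic mean on the opposite edge, before completing the square to produce $|AS|^2(\psi;\Box)$. The only difference is organisational: the paper performs one chain of equalities on the full sum, whereas you split off the cross terms from the $\hrho_1$ terms and recombine at the end.
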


Note that the definition of $|AS|(\psi;\Box)$ does not depend on the
parametrisation of $\Box$.

\begin{proof}
  We set $\rho_i=\rho(x_i)$ and $g_i=\psi(x_{i+1})-\psi(x_i)$ for
  $i=1,2,3,4$ with the convention $x_0 = x_4$ and $x_5=x_1$. Moreover,
  we define $\hrho_{i,j}$ and $\hrho^1_{i,j},\hrho^2_{i,j}$ as in the
  proof of Lemma \ref{lem:triangle}. Using \eqref{eq:theta1},
  \eqref{eq:theta2} and the identity $g_1+g_2+g_3+g_4=0$ we obtain
  \begin{align*}
    \cB^{\rm off}_\Box(\rho,\psi)
  ~&=~ \frac{\mu}{d^2}\sum\limits_{i=1}^4
    \frac12 g_i^2 \Big[\hrho^1_{i,i+1}(\rho_{i-1}-\rho_i) + \hrho^2_{i,i+1}(\rho_{i+2}-\rho_{i+1})\Big]
 -g_i(g_{i+1}+g_{i-1})\hrho_{i,i+1}\\
    ~&=~ \frac{\mu}{d^2}\sum\limits_{i=1}^4
    \frac12 g_i^2 \Big[\hrho^1_{i,i+1} \rho_{i-1}  + \hrho^2_{i,i+1} \rho_{i+2}\Big]
 -g_i(g_{i+1}+\frac12 g_i+g_{i-1})\hrho_{i,i+1}\\
  &=~\frac{\mu}{d^2}\sum\limits_{i=1}^4
    \frac12 g_i^2 \Big[ \hrho_{i-1,i+2} + \sD_{i,i+1}^{i-1,i+2} \Big]
   + g_i(\frac12 g_i + g_{i+2})\hrho_{i,i+1}\\
    &=~\frac{\mu}{4d^2}\sum\limits_{i=1}^4  (g_i+g_{i+2})^2
    \Big[\hrho_{i-1,i+2} + \hrho_{i,i+1}\Big]
    + 2 g_i^2 \sD_{i,i+1}^{i-1,i+2}
    \\
    &=~\frac{\mu}{8d^2}\sum\limits_{i=1}^4  (g_i+g_{i+2})^2
   \Big[\sum\limits_{j=1}^4 \hrho_{j,j+1}\Big]
   + 4 g_i^2 \sD_{i,i+1}^{i-1,i+2}\;,
 \end{align*}
which yields the desired identity.
\end{proof}

\begin{remark}\label{rem:square-sharp}
The bound $\cB^{\rm off}_\Box(\rho,\psi) \geq 0$ is sharp, in the sense that there exist $\rho\in\R_+^\cX$ and $\psi\in\R^\cX$ with $\cB^{\rm off}_\Box(\rho,\psi) = 0$ and $\cA(\rho, \psi) > 0$. Take for instance $\rho_i = 1$ for all $i$ and a non-nonstant function $\psi$ with $|AS|(\psi;\Box) = 0$.
\end{remark}

\section{The Bernoulli--Laplace model}
\label{sec:bernlap}

For integers $n>1$ and $1\leq k\leq n-1$ consider the $k$-slice of the
$n$-dimensional discrete cube
\begin{align*}
  \Omega(n,k) = \big\{x\in\{0,1\}^n~:~x_1+\cdots + x_n=k \big\}\;.
\end{align*}
Two points in $\Omega(n,k)$ are declared neighbors if they differ in exactly two coordinates. Let us set
\begin{align*}
  I(x) = \{i\leq n~:~x_i=1\}\;,\quad  J(x) = \{j\leq n~:~x_j=0\}\;.
\end{align*}
Then the neighbors of $x$ are given by $\{s_{ij}x\}_{i\in I(x),j\in J(x)}$, where
\begin{align*}
 (s_{ij}x)_i=0\;,~(s_{ij}x)_j=1\;,(s_{ij}x)_k=x_k\quad\forall k\neq i,j
\end{align*}
Note that every point $x\in\Omega(n,k)$ has $k(n-k)$
neighbors, and that the set of edges is $E=\big\{\{x,s_{ij}x\}~:~ x \in \Omega(n,k),\; i\in I(x),\;j\in J(x)\big\}$. The simple random walk on $(\Omega(n,k),E)$ is given by $Q_{\rm BL}(x,y)=(k(n-k))^{-1}$ whenever $x\sim y$, and has as
invariant measure the uniform measure $\pi(x)=\abs{\Omega(n,k)}^{-1}=\binom{n}{k}^{-1}$.

We have the following curvature bound for the Bernoulli Laplace model (Theorem \ref{thm:BL-main}).

\begin{theorem}\label{thm:bernlap}
  The simple random walk $Q_{\rm BL}$ on $\Omega(n,k)$ satisfies
  \begin{align*}
     \Ric(Q_{\rm BL}) \geq \frac{n+2}{2k(n-k)}\;.  
  \end{align*}
\end{theorem}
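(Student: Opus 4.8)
The plan is to verify the discrete Bochner inequality in part (3) of Theorem~\ref{thm:Ric-equiv}, i.e.\ $\cB(\rho,\psi) \geq \kappa\,\cA(\rho,\psi)$ with $\kappa = \frac{n+2}{2k(n-k)}$, for all $\rho\in\PXs$ and $\psi\in\R^{\Omega(n,k)}$. Since $Q_{\rm BL}$ is simple random walk on a $d$-regular graph with $d = k(n-k)$, I split $\cB = \cB^{\rm on} + \cB^{\rm off}$ as in Section~\ref{sec:crit} and treat the two pieces separately. For the on-diagonal part, Lemma~\ref{lem:diag} applied to the full graph $(\Omega(n,k),E)$ gives immediately $\cB^{\rm on}(\rho,\psi) \geq \frac{2}{d}\cA(\rho,\psi)$, contributing a term $\frac{2}{k(n-k)}\cA$.

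The core of the argument is a combinatorial decomposition of $\cB^{\rm off} = \sum_{e\sim e',\, e\neq e'} b(e,e')$, which I obtain by classifying ordered pairs of distinct adjacent edges. Writing $e = \{x, s_{ij}x\}$ and $e' = \{x, s_{kl}x\}$ meeting at $x$ (with $i,k\in I(x)$ and $j,l\in J(x)$), the two edges either share exactly one swap index — forcing $i = k$ or $j = l$, in which case $s_{ij}x$ and $s_{kl}x$ are adjacent and $\{x, s_{ij}x, s_{kl}x\}$ is a triangle — or they share no index, in which case $s_{ij}x$ and $s_{kl}x$ differ in four coordinates and span a chordless square. The mixed cases $i = l$ or $j = k$ are impossible since $I(x)$ and $J(x)$ are disjoint. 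This dichotomy lets me write $\cB^{\rm off} = \cB^{\rm off}_T + \cB^{\rm off}_S$, the sums over triangle-type and square-type pairs.

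For the triangle-type pairs, each such ordered pair lies in a unique triangle, so $\cB^{\rm off}_T = \sum_\triangle \cB^{\rm off}_\triangle$ over all triangles of $\Omega(n,k)$, and Lemma~\ref{lem:triangle} yields $\cB^{\rm off}_T \geq \frac{1}{2d}\sum_\triangle \cA_\triangle$. I then compute the triangle-multiplicity of a fixed edge $\{x, s_{ij}x\}$: there are $n-k-1$ triangles obtained by varying the target coordinate $j$ and $k-1$ obtained by varying the source coordinate $i$, for a total of $n-2$. Hence $\sum_\triangle \cA_\triangle = (n-2)\cA$ and $\cB^{\rm off}_T \geq \frac{n-2}{2d}\cA$. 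For the square-type pairs, Lemma~\ref{lem:square} gives $\cB^{\rm off}_\Box \geq 0$ for each square, so it suffices to realize $\cB^{\rm off}_S$ as a nonnegative multiple of $\sum_\Box \cB^{\rm off}_\Box$. I count the square-multiplicity of a square-type corner: for $e,e'$ with disjoint indices, the common neighbors of $s_{ij}x$ and $s_{kl}x$ other than $x$ are exactly $s_{ij}s_{kl}x$, $s_{kj}x$ and $s_{il}x$, giving precisely three squares through the pair. As this multiplicity is uniform, $\sum_\Box \cB^{\rm off}_\Box = 3\,\cB^{\rm off}_S$, whence $\cB^{\rm off}_S = \tfrac13 \sum_\Box \cB^{\rm off}_\Box \geq 0$.

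Combining the three bounds gives $\cB \geq \big(\frac{2}{d} + \frac{n-2}{2d}\big)\cA = \frac{n+2}{2d}\cA = \frac{n+2}{2k(n-k)}\cA$, which is exactly the inequality required by Theorem~\ref{thm:Ric-equiv}(3). I expect the main obstacle to be the combinatorial bookkeeping in the off-diagonal term: establishing the clean triangle/square dichotomy for adjacent edge pairs and, crucially, verifying that the two relevant multiplicities — the $n-2$ triangles per edge and the $3$ squares per square-type corner — are uniform across $\Omega(n,k)$, so that $\sum_\triangle \cA_\triangle$ and $\sum_\Box \cB^{\rm off}_\Box$ collapse to global multiples of $\cA$ and of $\cB^{\rm off}_S$. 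Once these counts are pinned down, the curvature constant $\frac{n+2}{2k(n-k)}$ emerges with no further estimation.
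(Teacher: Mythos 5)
Your strategy coincides with the paper's: decompose $\cB$ into on-diagonal, triangle-type and square-type contributions and apply Lemmas \ref{lem:diag}, \ref{lem:triangle}, \ref{lem:square}. The on-diagonal bound, the triangle/square dichotomy for adjacent edge pairs, and the triangle bookkeeping ($n-2$ triangles per edge, giving $\cB^{\rm off}_T \geq \frac{n-2}{2d}\cA$) are all correct and agree with the paper. The genuine gap is in the square-type step. Your count of three $4$-cycles through a square-type corner is correct (the common neighbours of $s_{ij}x$ and $s_{kl}x$ other than $x$ are indeed $s_{kl}s_{ij}x$, $s_{kj}x$ and $s_{il}x$), but the identity you deduce from it, $\sum_\Box \cB^{\rm off}_\Box = 3\,\cB^{\rm off}_S$, is false. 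Two of those three squares are chorded, and their remaining corners are \emph{triangle-type} pairs: in the square $x,\,s_{ij}x,\,s_{il}x,\,s_{kl}x$ the corner at $s_{ij}x$ consists of the edges $\{s_{ij}x,x\}$ and $\{s_{ij}x,s_{il}x\}$, whose swaps both involve the index $j$ (equivalently, $x\sim s_{il}x$ is a chord, so that square is two triangles glued along a diagonal); similarly at $s_{kl}x$. Since every triangle-type pair sits in $n-3$ such chorded squares, the correct identity is $\sum_\Box \cB^{\rm off}_\Box = 3\,\cB^{\rm off}_S + (n-3)\,\cB^{\rm off}_T$. Nonnegativity of each $\cB^{\rm off}_\Box$ therefore yields only $3\,\cB^{\rm off}_S + (n-3)\,\cB^{\rm off}_T \geq 0$, from which $\cB^{\rm off}_S \geq 0$ does not follow; and trading this against the triangle bound gives $\cB^{\rm off}_S \geq -\frac{n-3}{3}\cB^{\rm off}_T$, which destroys the constant for large $n$.

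The repair is to sum only over the \emph{chordless} squares $x,\ s_{ij}x,\ s_{kl}s_{ij}x,\ s_{kl}x$: all eight ordered corner pairs of such a square are square-type, and every square-type pair lies in exactly one of them, so $\cB^{\rm off}_S = \sum_{\Box\ \mathrm{chordless}} \cB^{\rm off}_\Box \geq 0$ by Lemma \ref{lem:square}, and your final bound $\cB \geq \bigl(\frac{2}{d} + \frac{n-2}{2d}\bigr)\cA = \frac{n+2}{2k(n-k)}\cA$ follows exactly as you wrote. Incidentally, the paper's own formulation of this step (``every pair in $P_2$ is part of precisely two squares'', with the ensuing factor $\frac12$) is also imprecise --- your count of three $4$-cycles is the right one, and only one of them has all corners square-type --- but the paper, like you, only needs nonnegativity, which the restriction to corner-homogeneous squares supplies; this is exactly the care it takes in the random transposition proof, where the sums are restricted to the classes $A_1$, $A_2$ of squares all of whose corners lie in the same class $P_i$.
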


\begin{proof}
  Let us set $d=k(n-k)$ and $\mu = \binom{n}{k}^{-1}$. Then we need to show that for any
  $\rho\in \cP_*(\Omega(n,k))$ and any $\psi : \Omega(n,k) \to \R$ we have
  \begin{align*}
    \cB(\rho,\psi)~\geq~\frac{n+2}{2d}\cA(\rho,\psi)\;.
  \end{align*}
  Let $P=\big\{(e,e')\in E\times E~:~e\sim e', e\neq e'\big\}$ be the
  set of pairs of adjacent non-identical edges. We define a
  decomposition $P=P_1\cup P_2$ as follows. For $(e,e')\in P$ we have
  $e=\{x,s_{ij}x\}$ and $e'=\{x,s_{pq}x\}$ for some $x\in\Omega(n,k)$
  and $i,p\in I(x)$, $j,q\in J(x)$. We say that $(e,e')\in P_1$ if
  $e,e'$ {``overlap''}, i.e., $i=p$ or $j=q$. Otherwise, if $i\neq
  p$ and $j\neq q$ we say that $(e,e')\in P_2$. Now we can write
  \begin{align*}
   \cB(\rho,\psi)~&=~\cB^{\rm on}(\rho,\psi) + \cB^{{\rm off},1}(\rho,\psi) + \cB^{{\rm off},2}(\rho,\psi)\;, \quad \text{ where }\\
    \cB^{{\rm off},i}(\rho,\psi) ~&=~ 
    \sum\limits_{(e,e')\in P_i}b(e,e')\;,\quad i=1,2\;.
  \end{align*}
  Note that every pair $(e,e')\in P_1$ is part of a unique triangle in
  the graph $(\Omega(n,k),E)$. Indeed, $s_{ij}x$ and $s_{iq}x$
  differ in exactly two coordinates, namely $j$ and $q$. Similarly,
  $s_{ij}x$ and $s_{pj}x$ differ exactly in $i$ and $p$. Moreover,
  every edge $e\in E$ is part of $n-2$ different triangles. Indeed,
  any two neighbors $x,s_{ij}x$ have exactly $n-2$ common neighbors,
  namely the points $s_{iq}x$, $q\in J(x)\setminus\{j\}$ and
  $s_{pj}x$, $p\in I(x)\setminus\{i\}$. Thus we obtain
  \begin{align*}
    \cB^{{\rm off},1}(\rho,\psi)~&=~ 
    \sum\limits_{\triangle}\cB^{\rm off}_\triangle(\rho,\psi)
    ~\geq~ \frac1{2d}\sum\limits_{\triangle}\cA_\triangle(\rho,\psi)
    ~=~ \frac{n-2}{2d}\cA(\rho,\psi)\;,
  \end{align*}
  where we haved summed over all triangle subgraphs $\triangle$ and
  used Lemma \ref{lem:triangle}. Now note that every pair $(e,e')\in
  P_2$ is part of precisely two squares in the graph
  $(\Omega(n,k),E)$. Indeed, if $i\neq p$ and $j\neq q$, the points
  $x,s_{ij}x,s_{pq}s_{ij}x,s_{pq}x$ and the points $x,s_{ij}x,
  s_{iq}x,s_{pq}x$ form a cycle. Thus, using Lemma \ref{lem:square},
  we obtain 
  \begin{align*}
    \cB^{{\rm off},2}(\rho,\psi)~&=~ 
     \frac12\sum\limits_{\Box}\cB^{\rm off}_\Box(\rho,\psi)
    ~\geq~ 0\;,
  \end{align*}
  where we have summed over all square subgraphs $\Box$.

Finally, putting everything together and using Lemma
  \ref{lem:diag} we get
  \begin{align*}
    \cB(\rho,\psi)~&\geq~ \frac2d \cA(\rho,\psi) + \frac{n-2}{2d}\cA(\rho,\psi)
                  ~=~\frac{n+2}{2d}\cA(\rho,\psi)\;. 
  \end{align*}    
\end{proof}

As noted in the introduction, we recover the best known constant in
the modified logarithmic Sobolev inequality as a corollary.

\section{The random transposition model}
\label{sec:randtrapo}

Let $\cS_n$ be the set of permutations of $[n]:=\{1,\dots,n\}$, i.e.,
$\cS_n$ consists of all bijective maps $\sigma : [n] \to [n]$. The
composition $\sigma_1 \circ \sigma_2$ of two permutations $\sigma_1,
\sigma_2 \in \cS_n$ will be denoted by $\sigma_1 \sigma_2$.  For $1
\leq i < j \leq n$ let $\tau_{ij}\in \cS_n$ denote the transposition
which interchanges $i$ and $j$, i.e.,
\begin{align*}
  \tau_{ij}(i) = j\;,\quad \tau_{ij}(j) = i\;,\quad \tau_{ij}(k) = k\quad\forall k\neq i,j\;.
\end{align*}
We define a graph structure on the group $\cS_n$ by saying that two
permutations are neighbors if they differ by precisely one
transposition. Thus every vertex $\sigma\in \cS_n$ has $\binom{n}{2}$
neighbors given by $\{\tau_{ij}\sigma\}_{1\leq i<j\leq n}$, and the
set of edges is $E=\big\{\{\sigma,\tau_{ij}\sigma\}~:~1\leq i<j\leq
n\big\}$.  The simple random walk on $(\cS_n,E_n)$ is given by the
Markov transition rates
\begin{align*}
Q_{\rm RT}(\sigma,\eta)=
 \left\{ \begin{array}{ll}
\frac{2}{n(n-1)}\;,
 & \text{if } \sigma\sim\eta\;,\\
0\;,
 & \text{otherwise },\end{array} \right.
\end{align*}
and the uniform probability measure $\pi_n$ given by
$\pi_n(\sigma)=\abs{\cS_n}^{-1}=(n!)^{-1}$ is reversible for $Q$.

We have the following curvature bound for the random transposition
model.

\begin{theorem}\label{thm:ran-tea}
  The simple random walk $Q_{\rm RT}$ on $\cS_n$ satisfies
  \begin{align*}
     \Ric(Q_{\rm RT}) \geq \frac{4}{n(n-1)}\;.  
  \end{align*}
\end{theorem}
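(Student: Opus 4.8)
The plan is to follow the strategy used for the Bernoulli--Laplace model in the proof of Theorem \ref{thm:bernlap}, exploiting that $(\cS_n,E)$ is $d$-regular with $d = \binom{n}{2}$ and $Q_{\rm RT} = 1/d$, so that we are once more in the simple-random-walk setting of Section \ref{sec:crit}. Writing $\mu = (n!)^{-1}$ and noting $\frac{4}{n(n-1)} = \frac{2}{d}$, the characterisation in Theorem \ref{thm:Ric-equiv} reduces the claim to showing $\cB(\rho,\psi) \geq \frac{2}{d}\cA(\rho,\psi)$ for all $\rho \in \cP_*(\cS_n)$ and $\psi : \cS_n \to \R$. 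I split $\cB = \cB^{\rm on} + \cB^{\rm off}$ as before. The on-diagonal term is handled immediately by Lemma \ref{lem:diag}, which gives $\cB^{\rm on} \geq \frac{2}{d}\cA$. Hence the entire theorem reduces to the single inequality $\cB^{\rm off}(\rho,\psi) \geq 0$.

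The crucial structural observation, and the point of departure from the Bernoulli--Laplace case, is that the transposition graph contains \emph{no triangles}: it is bipartite with the two classes being the even and odd permutations, since a product of two distinct transpositions is always a $3$-cycle or a double transposition, never a transposition. Consequently every adjacent non-identical pair $(e,e') \in P$ is of the form $e = \{\sigma,\tau\sigma\}$, $e' = \{\sigma,\tau'\sigma\}$ for two distinct transpositions $\tau,\tau'$, and these are either \emph{overlapping} (sharing exactly one index) or \emph{disjoint}. I decompose $P = P_1 \cup P_2$ accordingly, so that $\cB^{\rm off} = \cB^{{\rm off},1} + \cB^{{\rm off},2}$, and I account for both pieces through squares via Lemma \ref{lem:square} (the triangle Lemma \ref{lem:triangle} plays no role here).

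The combinatorial bookkeeping is then carried out separately for each type. For a disjoint pair with transpositions $\tau_{ij}, \tau_{pq}$, the commutation $\tau_{ij}\tau_{pq} = \tau_{pq}\tau_{ij}$ shows there is exactly one nondegenerate $4$-cycle closing the corner, namely $\sigma, \tau_{ij}\sigma, \tau_{pq}\tau_{ij}\sigma, \tau_{pq}\sigma$, and all four corners of such a ``disjoint square'' are themselves disjoint pairs; this yields the exact identity $\cB^{{\rm off},2} = \sum_{\Box} \cB^{\rm off}_\Box$ over disjoint squares, which is $\geq 0$ by Lemma \ref{lem:square}. For an overlapping pair with transpositions $\tau_{ij}, \tau_{ik}$ on a common triple $\{i,j,k\}$, the local structure is the copy of $K_{3,3}$ on the six permutations $S_{\{i,j,k\}}\sigma$; here the corner closes into exactly two $4$-cycles, with fourth vertex $\tau_{ik}\tau_{ij}\sigma$ or $\tau_{jk}\tau_{ij}\sigma$, and all corners of these ``overlapping squares'' are again overlapping pairs on the same triple. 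This gives $\cB^{{\rm off},1} = \frac12\sum_{\Box} \cB^{\rm off}_\Box \geq 0$. Adding the two pieces yields $\cB^{\rm off} \geq 0$, and together with the on-diagonal bound we obtain $\cB \geq \frac{2}{d}\cA = \frac{4}{n(n-1)}\cA$.

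The only step demanding genuine care is this combinatorial core: verifying triangle-freeness, pinning down the square multiplicities (one for disjoint corners, two for overlapping corners), and checking that each square is \emph{pure}, in the sense that all four of its corners lie in a single class. These facts are exactly what makes the two square-decompositions above equalities rather than inequalities, so that the nonnegativity furnished by Lemma \ref{lem:square} transfers cleanly to $\cB^{\rm off}$. I expect no analytic difficulty beyond this; and because no positive triangle contribution enters, the final constant is precisely the bare on-diagonal value $\frac{2}{d} = \frac{4}{n(n-1)}$.
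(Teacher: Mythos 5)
Your proposal is correct and follows essentially the same route as the paper's own proof: the same split of adjacent edge pairs into overlapping and disjoint classes (with the labels $P_1$, $P_2$ merely swapped), the same square multiplicities (one pure square per disjoint corner, two per overlapping corner), and the same final reduction to Lemmas \ref{lem:diag} and \ref{lem:square}. The explicit observations on triangle-freeness/bipartiteness and the local $K_{3,3}$ structure are pleasant additions but do not alter the argument.
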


\begin{proof}
  Let us set $d=\frac{n(n-1)}{2}$. 
  Then we need to show that for any
  $\rho\in\cP_*(\cS_n)$ and $\psi\in\R^\cX$ we have
  \begin{align*}
    \cB(\rho,\psi)~\geq~\frac{2}{d}\cA(\rho,\psi)\;.
  \end{align*}
  Let $P=\big\{(e,e')\in E\times E~:~e\sim e', e\neq e'\big\}$ be the
  set of pairs of adjacent non-identical edges. We define a
  decomposition $P=P_1\cup P_2$ as follows. For $(e,e')\in P$ we have
  $e=\{\sigma,\tau_{ij}\sigma\}$ and $e'=\{\sigma,\tau_{pq}x\}$ for
  some $\sigma\in \cS_n$ and $i<j,p<q$. We say that $(e,e')\in P_1$ if
  $e,e'$ do not ``overlap'',
  i.e., $\{i,j\}\cap\{p,q\}=\emptyset$. Otherwise, if
  $\{i,j\}\cap\{p,q\}\neq\emptyset$ we say $(e,e')\in P_2$.
  Now we can write
  \begin{align*}
   \cB(\rho,\psi)~&=~\cB^{\rm on}(\rho,\psi) + \cB^{{\rm off},1}(\rho,\psi) + \cB^{{\rm off},2}(\rho,\psi)\;, \qquad \text{ where }\\
    \cB^{{\rm off},i}(\rho,\psi) ~&=~ \sum\limits_{(e,e')\in P_i}b(e,e')\;,\quad i=1,2\;.
  \end{align*}
  Note that every pair $(e,e')\in P_1$ is part of a unique square in
  the graph $(\cS_n,E)$. Indeed, $\tau_{ij}\sigma$ and
  $\tau_{pq}\sigma$ have the vertex
  $\tau_{pq}\tau_{ij}\sigma=\tau_{ij}\tau_{pq}\sigma$ as their unique
  common neighbor besides $\sigma$. Observe that all pairs of adjacent
  edges in this square belong to $P_1$. Every pair $(e,e')\in P_2$ is
  part of exactly two squares. Indeed, let $(e,e')\in P_2$, and assume
  without loss of generality that $e=\{\sigma,\tau_{ij}\sigma\}$ and
  $e'=\{\sigma,\tau_{iq}\sigma\}$. Then
  $\tau_{iq}\tau_{ij}\sigma=\tau_{jq}\tau_{iq}\sigma$ and
  $\tau_{jq}\tau_{ij}\sigma=\tau_{ij}\tau_{iq}\sigma$ are the two
  distinct common neighbors of $\tau_{ij}\sigma$ and
  $\tau_{iq}\sigma$. Note that all pairs of adjacent edges in these
  squares belong to $P_2$.
  
  For $i=1,2$, we let $A_i$ denote the set of all square subgraphs of
  $(\cS_n,E)$ in which each adjacent pair of edges belongs to
  $P_i$. Thus we obtain
  \begin{align*}
    \cB^{{\rm off},1}(\rho,\psi)~&=~ 
    \sum\limits_{\Box\in A_1}\cB^{\rm off}_\Box(\rho,\psi)
    ~\geq~0\;,\\
    \cB^{{\rm off},2}(\rho,\psi)~&=~ 
    \frac12\sum\limits_{\Box\in A_2}\cB^{\rm off}_\Box(\rho,\psi)
    ~\geq~0\;.
  \end{align*}
  Finally, putting everything together and using Lemma
  \ref{lem:diag} we get
  \begin{align*}
    \cB(\rho,\psi)~&\geq~ \cB^{\rm on}(\rho,\psi)~\geq~\frac2d \cA(\rho,\psi)\;,
  \end{align*}   
  which completes the proof. 
\end{proof}

One might hope that the above lower Ricci bound can be improved by
looking at subgraphs isomorphic to $\cS_3$ taking over the role of the
triangles in the Bernoulli--Laplace model. However, the next lemma
shows that they only give a nonnegative contribution to the
off-diagonal $\cB$-term in general, which does not improve the bound
obtained in Theorem \ref{thm:ran-tea}.

\begin{lemma}\label{lem:BoffsharpS3}
  For all $\rho: \cS_3\to\R_+$ and $\psi:\cS_3\to\R$ we have
  \begin{align*}
    \cB^{\rm off}(\rho,\psi)~\geq~0\;.
  \end{align*}
  Moreover, this bound is sharp in the sense that for any $\kappa > 0$
  there exist $\rho$ and $\psi$ such that
  $\cB^{\rm off}(\rho,\psi) < \kappa \cA(\rho, \psi)$.
\end{lemma}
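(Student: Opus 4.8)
The starting point is to identify the graph. The Cayley graph of $\cS_3$ with the transposition generators is the complete bipartite graph $K_{3,3}$: the even and odd permutations form the two sides, and multiplication by a transposition reverses parity, so every even permutation is adjacent to all three odd ones. This graph is bipartite, hence triangle-free, and its only $4$-cycles are the nine ``squares'' obtained by choosing two vertices on each side. Since any two distinct transpositions of $\{1,2,3\}$ necessarily share a point, every pair of adjacent distinct edges ``overlaps'', so this is exactly the situation $P=P_2$ of the proof of Theorem \ref{thm:ran-tea}. A direct count — each of the $36$ ordered adjacent pairs lies in precisely two squares, and all four corner-pairs of every square are genuine adjacent pairs — gives $\cB^{\rm off}(\rho,\psi)=\frac12\sum_\Box \cB^{\rm off}_\Box(\rho,\psi)$, the sum running over the nine squares of $K_{3,3}$. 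Every summand is $\geq 0$ by Lemma \ref{lem:square}, which proves $\cB^{\rm off}\geq 0$.

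For sharpness I would build a degenerating family $(\rho,\psi)$ concentrated near a single edge. The model case is Remark \ref{rem:square-sharp}: on a single square $\cB^{\rm off}_\Box$ vanishes when $\rho$ is constant and the alternating sum $|AS|(\psi;\Box)$ is zero. This cannot be arranged globally, since a nonconstant $\psi$ on $K_{3,3}$ cannot kill $|AS|$ on all nine squares at once; the idea is therefore to push $\rho$ toward the boundary of $\PXs$ in order to suppress the square-weights $\sum_{e\in\Box}\hat\rho(e)$ on the squares where $|AS|\neq 0$. Concretely, I would fix one edge $e^\star=\{x_0,y_0\}$, keep $\rho\equiv 1$ on $x_0,y_0$ and let $\rho\to 0$ on the remaining four vertices. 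Then $\cA(\rho,\psi)$ stays bounded below, because only $e^\star$ carries weight, while the alternating-sum contribution of every square not containing $e^\star$ disappears in the limit; the four squares through $e^\star$ would be handled by choosing the remaining values of $\psi$ so as to make their alternating sums small.

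The hard part, and the step I expect to be the main obstacle, is the control of the deficit terms $\sD$ in Lemma \ref{lem:square}, which do \emph{not} vanish in this limit. For a light–light edge lying opposite a retained heavy vertex one finds, using $\partial_1\theta(\eps,\eps)=\tfrac12$, that $\sD(\eps,\eps;1,\eps)\to\tfrac12$; hence the deficit survives whenever the discrete gradient of $\psi$ across such an edge is nonzero. But forcing $|AS|=0$ on the squares through $e^\star$ pins the light values of $\psi$ and so creates precisely such nonzero gradients, so the two effects compete. I would therefore keep the light values of $\psi$ as free parameters and minimise the resulting nonnegative quadratic form, balancing the alternating-sum penalty of the squares through $e^\star$ against the surviving deficit penalty of the remaining squares; pushing the ratio $\cB^{\rm off}/\cA$ below an arbitrary $\kappa$ is where the estimate must be carried out with care, iterating the construction across several scales of $\rho$ if a single edge does not suffice. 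Showing that the accumulated deficit contributions can be made negligible relative to $\cA$ is the delicate, computational crux of the sharpness assertion.
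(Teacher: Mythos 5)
Your proof of the first assertion, $\cB^{\rm off}(\rho,\psi)\geq 0$, is correct and coincides with the paper's argument: the Cayley graph of $\cS_3$ is $K_{3,3}$, every ordered pair of distinct adjacent edges lies in exactly two of the nine squares, hence $\cB^{\rm off}=\tfrac12\sum_{\Box}\cB^{\rm off}_{\Box}\geq 0$ by Lemma \ref{lem:square}.

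The sharpness assertion, however, is a genuine gap: you outline a strategy and explicitly defer what you yourself call ``the delicate, computational crux'', so the second half of the lemma is not proved. Moreover, the concrete ansatz you commit to --- a heavy edge $e^\star=\{x_0,y_0\}$ with $\rho\equiv 1$ at its endpoints, $\rho\to 0$ at the remaining four vertices, and $\cA$ kept bounded below --- cannot be made to work. Keeping $\cA$ bounded below forces the gradient across $e^\star$ to stay bounded below (every other edge weight vanishes), say $\psi(y_0)-\psi(x_0)=1$. Write $x_1,x_2$ and $y_1,y_2$ for the light vertices on the two sides and $u_{ij}=\psi(x_i)-\psi(y_j)$. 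If all light vertices share one scale $\eps$, then the square through $e^\star$ with corners $x_0,y_0,x_i,y_j$ contributes at least $\tfrac{\mu}{2d^2}(u_{ij}-1)^2\,\theta(1,1)$ via its alternating sum, while each light--light edge $\{x_i,y_j\}$ lies in two flanking squares where its deficit is $\sD(\eps,\eps;1,\eps)=\tfrac{1+\eps}{2}-\theta(1,\eps)\to\tfrac12$, multiplied by $u_{ij}^2$. Summing and using the global factor $\tfrac12$, one gets $\cB^{\rm off}\geq \tfrac{\mu}{4d^2}\sum_{i,j}\bigl[(u_{ij}-1)^2+u_{ij}^2\bigr]+o(1)\geq\tfrac{\mu}{2d^2}+o(1)$, while $\cA=\tfrac{\mu}{d}(1+o(1))$ in the regime you describe; the ratio is therefore pinned near $\tfrac{1}{2d}=\tfrac16$ and can never be pushed below an arbitrary $\kappa$. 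Giving the light vertices different scales does not help either: for instance $\rho(x_1)=\eps$, $\rho(y_1)=\eps^2$ produces on a square through $e^\star$ the deficit $\sD(\eps,\eps^2;1,1)=\partial_1\theta(\eps,\eps^2)+\partial_2\theta(\eps,\eps^2)-1\sim (\eps\log^2\eps)^{-1}$, which blows up. So the competition you correctly identified is, within your ansatz, genuinely unresolvable.

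The paper's example escapes precisely by abandoning the requirement that $\cA$ stay bounded below. It takes a single heavy \emph{vertex} $x_0$ and sets $\rho_\eps=\eps^{\,\mathrm{dist}(\cdot,x_0)}$ (values $1,\eps,\eps^2$) and $\psi=\mathrm{dist}(\cdot,x_0)$ (values $0,1,2$), where $\mathrm{dist}$ is the graph distance. On every square through $x_0$ the $\rho$-values in cyclic order are $(1,\eps,\eps^2,\eps)$, so each edge and its opposite vertex pair satisfy the proportionality $u/s=v/t$, which is the equality case of \eqref{eq:theta2}: all four deficits vanish identically, and $|AS|(\psi;\Box)=|0-1+2-1|=0$ as well. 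The only surviving contributions come from the three all-light squares ($\rho$-values $(\eps,\eps^2,\eps,\eps^2)$), whose deficits are $O(1/\log^2(1/\eps))$ and whose $|AS|$-terms carry weights $O(\eps/\log(1/\eps))$, whereas $\cA(\rho_\eps,\psi)\approx \tfrac{3\mu}{d}\,\theta(1,\eps)\sim 1/\log(1/\eps)$. Both quantities tend to zero, but $\cB^{\rm off}/\cA = O(1/\log(1/\eps))\to 0$, which is exactly what the sharpness statement requires. The conceptual point your proposal misses is that the degeneration must make $\cA$ vanish as well, only more slowly than $\cB^{\rm off}$.
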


\begin{proof}
  The non-negativity follows from writing $\cB^{\rm off}$ as a sum of
  contributions from squares and using Lemma \ref{lem:square}. To see
  that this bound is sharp, define $\rho_\eps$ and $\psi$ by
\begin{center}
\begin{tikzpicture}[shorten >=1pt,->]
  \tikzstyle{label}=[circle,fill=white!20,minimum size=17pt,inner sep=0pt]
\node[label,xshift=4cm,yshift=.5cm]  {$\rho_\eps$ \ :};

  \tikzstyle{vertex}=[circle,fill=black!20,minimum size=17pt,inner sep=0pt]
  \foreach \name/\angle/\text in {P-1/360/\eps, P-2/60/1, P-3/120/\eps, 
                                  P-4/180/\eps^2, P-5/240/\eps, P-6/300/\eps^2}
    \node[vertex,xshift=6cm,yshift=.5cm] (\name) at (\angle:1cm) {$\text$};

  \foreach \from/\to in {1/2,2/3,3/4,4/5,5/6,6/1,1/4,2/5,3/6}
	{ \draw[-] (P-\from) edge (P-\to); }
\end{tikzpicture}
\qquad\qquad
\begin{tikzpicture}[shorten >=1pt,->]
  \tikzstyle{label}=[circle,fill=white!20,minimum size=17pt,inner sep=0pt]
\node[label,xshift=4cm,yshift=.5cm]  {$\psi$ \ :};

  \tikzstyle{vertex}=[circle,fill=black!20,minimum size=17pt,inner sep=0pt]
  \foreach \name/\angle/\text in {P-1/360/1, P-2/60/0, P-3/120/1, 
                                  P-4/180/2, P-5/240/1, P-6/300/2}
    \node[vertex,xshift=6cm,yshift=.5cm] (\name) at (\angle:1cm) {$\text$};

  \foreach \from/\to in {1/2,2/3,3/4,4/5,5/6,6/1,1/4,2/5,3/6}
	{ \draw[-] (P-\from) edge (P-\to); }
	
\end{tikzpicture}
\end{center} Then one can check that as $\eps\to 0$,
  \begin{align*}
    \frac{\cB^{\rm off}(\rho_\eps,\psi)}{\cA(\rho_\eps,\psi)}~\to~0\;.
  \end{align*}
\end{proof}

\bibliographystyle{plain}
\bibliography{ricci}

\end{document}